\documentclass[11pt,a4paper]{article}
\usepackage[T1]{fontenc}
\usepackage[utf8]{inputenc}
\usepackage[english]{babel}
\usepackage{lmodern,microtype,amsmath,amssymb,amsthm,mathtools}
\usepackage[margin=28mm]{geometry}
\usepackage{enumitem,booktabs,array,xcolor,tikz}
\usepackage{needspace}
\usetikzlibrary{arrows.meta,positioning}
\usepackage[colorlinks=true,linkcolor=blue!45!black,citecolor=blue!45!black,urlcolor=blue!45!black]{hyperref}
\newtheorem{theorem}{Theorem}[section]
\newtheorem{proposition}[theorem]{Proposition}
\newtheorem{lemma}[theorem]{Lemma}
\newtheorem{corollary}[theorem]{Corollary}
\theoremstyle{definition}
\newtheorem{definition}[theorem]{Definition}
\newtheorem{example}[theorem]{Example}

\newcommand{\F}{\mathbb F}
\newcommand{\M}{\mathcal M}
\newcommand{\Mh}{\mathcal M_{\F}^{h}}

\newcommand{\Sink}{\operatorname{Sink}}
\newcommand{\Source}{\operatorname{Source}}

\newcommand{\reach}{\rightsquigarrow}
\newcommand{\lk}{\operatorname{lk}}
\newcommand{\doi}[1]{\href{https://doi.org/#1}{doi:\nolinkurl{#1}}}
\setlist{itemsep=3pt,topsep=5pt}
\title{Morse--Maxwell Theory for Directed Acyclic Graphs}
\author{Oleg R. Musin}
\date{}
\hypersetup{pdftitle={Morse-Maxwell Theory for Directed Acyclic Graphs},
pdfauthor={Oleg R. Musin}}
\begin{document}
\maketitle

\begin{abstract}
The Maxwell graph of a directed acyclic graph records the extrema
of its iterated basin and hill boundaries. Their unweighted counts
need not satisfy Morse inequalities, and the reduction can change
homology. We assign multiplicities using the reduced homology of
lower and upper order links and retain every vertex with nonzero
local homology. This modified reduction preserves both hierarchies,
the local groups, and the homology of all sublevel and superlevel
order complexes. Its Morse numbers satisfy the weak and strong
Morse inequalities over any field. We give explicit formulas in
low dimensions and for complete layered graphs. We also construct
local Morse numbers for path homology and prove agreement with the
reachability construction on complete layered graphs, including
those without transitive arrows. Counterexamples distinguish the
homology theories and show that local Morse degrees need not equal
vertex depths. A comparison with discrete-gradient methods explains
the role of auxiliary choices.
\end{abstract}

\section{Introduction}
Morse theory relates the topology of a space to the changes that occur
as a height level passes its critical points. For a landscape, minima
create components, saddles can merge components or create loops, and
maxima can close holes. On a directed graph, the arrows supply a
notion of descent, but a vertex need not have just one of these roles.
Several branches can merge at once, and a single vertex can contribute
in more than one homological degree.

We start with a finite directed acyclic graph, or DAG. Equivalently,
one can assign distinct heights to its vertices so that every arrow
points downwards. Following all descending paths leads from a vertex
to one or several sinks, which play the role of local minima.
The vertices that reach at least two sinks form the \emph{basin
boundary}. Repeating this operation gives a hierarchy; reversing the
arrows gives the corresponding hierarchy of hill boundaries.
The \emph{Maxwell graph} retains the extrema at every stage and the
reachability relations between them.

The geometric origin of this construction goes back to Maxwell's
\emph{On Hills and Dales} \cite{Maxwell}, published in 1870,
55 years before Morse's paper on critical-point inequalities
\cite{Morse1925}. Maxwell partitions a surface into dales whose
descending slope lines reach the same bottom, and independently into
hills whose ascending lines reach the same summit. Watersheds and
watercourses separate these regions and connect their critical points.
Thus the surface--boundary--extremum structure underlying our
two-dimensional construction is already present in Maxwell's work.
The iterative boundary definition below extends this geometric idea
to arbitrary finite DAGs; it is not a definition stated by Maxwell
in modern graph-theoretic language.

Maxwell also follows the changes in regions as a level rises:
bottoms create regions of depression, bars merge them, and passes
separate regions of elevation. He explicitly assigns multiplicity
$k-1$ when $k$ regions merge at a singular point
\cite[pp.~235--239 of the reprint]{Maxwell}. This anticipates the
local contribution $\dim\widetilde H_0=k-1$ used here. Our extension
allows local links with homology in several degrees, which cannot
be described solely by the number of merging regions.
Related graph constructions appear in
\cite{Musin1992,Musin1993}. The hierarchy records how descending paths
branch, but its level counts do not themselves satisfy Morse
inequalities. For example, a vertex pointing to three sinks joins
three components at once: its contribution has multiplicity two.
Moreover, the original Maxwell reduction can remove a vertex needed
to preserve homology. The tree in Figure~\ref{fig:tree} illustrates
this second obstruction directly.

To formulate a Morse theorem, we must specify the space whose
topology the graph describes. We use its \emph{reachability order}:
$v\le u$ means that a directed path leads from $u$ to $v$.
Its order complex $K(D)$ contains a simplex for each chain of
comparable vertices. Thus three vertices occurring in descending
order determine a filled triangle, even if the graph has no arrow
joining the first to the last. This distinction allows a DAG to
describe topology in dimensions higher than one.

Fix a field $\F$. For a vertex $v$, let $L^-(v)$ be the order complex
of its strict descendants. Adding $v$ and all simplices with greatest
vertex $v$ attaches a cone on $L^-(v)$. The reduced homology of this
link therefore measures the possible changes at $v$. Define
\[
 \mu_r^-(v)=\dim_\F\widetilde H_{r-1}(L^-(v);\F),\qquad r\ge0,
\]
using $\widetilde H_{-1}(\varnothing;\F)=\F$.
A sink contributes one generator in degree zero; a lower link with
three contractible components contributes two in degree one.
The resulting \emph{Morse numbers} and Betti numbers are
\begin{equation}\label{eq:counts}
 m_r(D)=\sum_{v\in V(D)}\mu_r^-(v),\qquad
 \beta_r(D)=\dim_\F H_r(K(D);\F).
\end{equation}

\Needspace{9\baselineskip}
\begin{theorem}[Morse inequalities]\label{thm:main}
For every finite directed acyclic graph $D$, every field $\F$, and every
$q\ge0$,
\begin{equation}\label{eq:strong}
 \sum_{r=0}^{q}(-1)^{q-r}m_r(D)
 \ \ge\
 \sum_{r=0}^{q}(-1)^{q-r}\beta_r(D),
\end{equation}
with equality for all sufficiently large $q$.
The weak inequalities $m_r(D)\ge\beta_r(D)$ also hold.
\end{theorem}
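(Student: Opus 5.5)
Choose a linear extension $v_1,\dots,v_n$ of the reachability order with every strict descendant of $v_i$ appearing before $v_i$; such an ordering exists because $D$ is acyclic. Let $K_i$ be the order complex of the induced subposet on $\{v_1,\dots,v_i\}$. Then $K_0=\varnothing$, $K_n=K(D)$, and we get a filtration by subcomplexes. The simplices of $K_i$ not in $K_{i-1}$ are exactly the chains whose greatest element is $v_i$. These form the open star of $v_i$ in $K_i$. Its closure is the cone $v_i * L^-(v_i)$, and $K_{i-1}\cap (v_i*L^-(v_i)) = L^-(v_i)$, because every strict descendant of $v_i$ already lies in $K_{i-1}$. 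So
\[
K_i = K_{i-1}\cup_{L^-(v_i)} \bigl(v_i * L^-(v_i)\bigr).
\]

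For the relative homology, excision and the long exact sequence of the pair (cone, link) give
\[
H_r(K_i,K_{i-1};\F)\cong H_r\bigl(v_i*L^-(v_i),L^-(v_i);\F\bigr)\cong \widetilde H_{r-1}(L^-(v_i);\F),
\]
since the cone is contractible. This uses the convention $\widetilde H_{-1}(\varnothing)=\F$ when $v_i$ is a sink: then the pair is (point, $\varnothing$), which has $H_0=\F$. Hence $\dim_\F H_r(K_i,K_{i-1})=\mu_r^-(v_i)$, and $\sum_i \dim H_r(K_i,K_{i-1}) = m_r(D)$.

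The inequalities then follow from the standard argument for filtrations by subadditive functions. Put
\[
S_q(X,A)=\sum_{r\le q}(-1)^{q-r}\dim H_r(X,A)
\]
for a triple $A\subset B\subset X$ of finite complexes over a field. Truncating the long exact sequence of the triple at $H_{q+1}(X,B)\to H_q(B,A)$ and using exactness gives
\[
S_q(X,A)\le S_q(X,B)+S_q(B,A),
\]
because the rank of the connecting map is nonnegative. Applying this inductively to $\varnothing=K_0\subset\cdots\subset K_n$ yields
\[
\sum_{r\le q}(-1)^{q-r}\beta_r(D)=S_q(K_n,\varnothing)\le \sum_i S_q(K_i,K_{i-1}) = \sum_{r\le q}(-1)^{q-r}m_r(D).
\]

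For $q$ at least $\dim K(D)+1$, the truncated sequences are complete. Then $S_q$ is $\pm$ the Euler characteristic, which is additive, so equality holds. The weak inequalities follow by adding the strong inequalities for $q$ and $q-1$: this gives $m_q-\beta_q\ge 0$, and for $q=0$ the strong inequality is already $m_0\ge\beta_0$.

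The main obstacle is the identification of the new simplices and of the intersection $K_{i-1}\cap\overline{\mathrm{st}}(v_i)$ with exactly $L^-(v_i)$. This depends on choosing the ordering so that all descendants come first. It also depends on the order complex being taken for the reachability order rather than the arrows alone, so that the link is the full order complex of the strict descendants. Everything else is the classical Morse-inequality bookkeeping from the long exact sequence.
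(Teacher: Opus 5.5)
Your proof is correct and follows the paper's argument. It uses the same filtration by a descendants-first linear extension and the same identification $H_r(K_i,K_{i-1})\cong\widetilde H_{r-1}(L^-(v_i);\F)$ via the cone on the lower link. The only difference is in the bookkeeping: you use Milnor's subadditivity of $S_q$ over triples, while the paper tracks the connecting-map ranks $a_{i,r}$ of each pair to get the explicit defect $m_r-\beta_r=A_r+A_{r+1}$, which it reuses to show that $Q_D$ has nonnegative coefficients.
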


Equivalently, the Morse and Poincar\'e polynomials satisfy
$m_D(t)-P_D(t)=(1+t)Q_D(t)$ with nonnegative coefficients in $Q_D$.
Their definitions and interpretation are collected in
Section~\ref{sec:polynomials}.

Reversing all arrows gives the ascending inequalities. We retain
every vertex selected by either Maxwell hierarchy and every vertex
with nonzero lower or upper local homology. The resulting
\emph{homological Maxwell graph} preserves both hierarchies, all
remaining local groups, and the homology of every sublevel and
superlevel order complex. Thus it preserves the data in
\eqref{eq:counts}, together with their evolution through the graph.
The proof uses the standard homological argument behind Morse
inequalities \cite{Milnor,Minian,Skoldberg}; the additional requirement
is to preserve both Maxwell hierarchies under reduction.
The deletion argument is related to reductions of finite posets
\cite{BarmakMinian}.

The homology of $K(D)$ is reachability homology \cite{HR}.
For a DAG it also equals path homology of the transitive closure
\cite{Muranov}. Path homology of the original graph \cite{GLMY},
however, uses its actual arrows and can give different answers.
It is also distinct from the singular simplicial theories studied
in \cite{BWYY}. Section~\ref{sec:outlook} keeps these constructions
separate. It gives Morse inequalities for the unchanged Maxwell
graph using local multiplicities computed there, and for ordinary
path homology using relative local groups at the vertices.
Simply substituting path Betti numbers into \eqref{eq:strong} fails:
a subdivided diamond has $m_1=0$ but first path Betti number one.
In contrast, complete layered graphs give a substantial family
where all three constructions agree, even without transitive arrows.

The distinction concerns both the space being measured and the
data retained by a reduction. Reachability homology forgets whether
a comparable pair is joined by an arrow or by a longer directed
path. Ordinary path homology retains that distinction: deleting an
interior vertex from an allowed path may produce a forbidden face,
and linear combinations must cancel all such faces. The unchanged
Maxwell graph makes a different choice, retaining only the vertices
selected by the two boundary hierarchies. Its order complex can lose
homology or acquire new classes. Our modified reduction retains
additional vertices when their local homology requires it, so that
the original reachability homology survives. Two connected examples in Section~\ref{sec:differences} make
these distinctions concrete. The order complex of $\Sigma'$ is
homotopy equivalent to $S^2$, and our reduction preserves its
nonzero second homology. Both the original Maxwell graph and the
path complex instead have the homology of $S^1$. A second graph,
$\Lambda$, has the homology of $S^1$ in all three theories, but its
correction polynomial is $Q=1$ for our construction and $Q=1+t$
for the local path construction. Thus preservation of topology
and sharpness of the inequalities are separate issues, even when
the homology under consideration is nontrivial.

An alternative approach begins with a discrete gradient.
Forman \cite{Forman} pairs cells with incident cells of the next
dimension, leaving critical cells unpaired; the Morse differential
is determined by gradient paths. Lin, Wang, and Yau
\cite{LinWangYau} develop a digraph version from a nonnegative
function on vertices, whose value on a path is the sum of its vertex
values. Under an extension condition and invariance of the path
chain complex under the gradient, their reduction preserves path
homology. This is a choice-dependent reduction of chains.
Our local groups and Morse numbers are fixed by the directed graph;
no auxiliary matching is chosen. Their multiplicities need not be
minimal, and the differential of the resulting Morse complex still
requires choices. Section~\ref{sec:gradient-comparison} explains this
distinction and why an arbitrary Forman matching need not arise from
the vertex functions used in \cite{LinWangYau}.

The relation with smooth Morse theory is through local attachments.
At a nondegenerate critical point of index $r$, the relative homology
of successive sublevel sets has one generator in degree $r$.
Here the corresponding relative group is
$\widetilde H_{r-1}(L^-(v);\F)$; it may have several generators,
or be nonzero in several degrees at the same vertex. This is the
homological viewpoint familiar from Morse theory on posets and
algebraic Morse theory \cite{Minian,Skoldberg}. Our specific goal is
to combine it with the two Maxwell hierarchies and a reduction that
preserves both lower and upper local data. Applying Forman's theory
directly to $K(D)$ is another valid way to obtain a Morse complex,
but its critical simplices need not be grouped by the original
vertices or retain these hierarchies.

Section~\ref{sec:definitions} gives the definitions and the tree
example. Section~\ref{sec:proofs} proves the reduction theorem and
gives computational bounds. Section~\ref{sec:examples} develops
explicit formulas in low dimensions and for complete layered graphs.
It also shows why a vertex's homological degrees cannot in general
be inferred from its depth. Section~\ref{sec:outlook} compares the
homology theories and ends with three questions suggested by these
results.

\section{Definitions}\label{sec:definitions}
\subsection{Directed graphs and their order complexes}
Throughout, $D=(V,A)$ is a finite directed acyclic graph (DAG),
without loops or parallel arrows. We use $u\reach v$ for
reachability, allowing a path of length zero. A sink has no outgoing
arrow and a source has no incoming arrow. The opposite graph
$D^{\mathrm{op}}$ reverses all arrows.

A \emph{height function} is a function $f:V\to\mathbb R$ strictly
decreasing on arrows. Every DAG admits a height function. Different
choices give the same construction, since only reachability is used.

\begin{definition}
The \emph{reachability order} $P=P(D)$ is given by
$v\le u$ if $u\reach v$. Its \emph{order complex} $K(D)=\Delta P$ has
one simplex for each nonempty chain of $P$.
For $U\subseteq V$, write $P|_U$ for the restricted order and
$K(U)=\Delta(P|_U)$.
\end{definition}

For example, $w<v<u$ determines a triangle in $K(D)$, even if
$u\to w$ is not an arrow of $D$. Thus $K(D)$ is not generally the
one-dimensional underlying graph. It is the clique complex of the
comparability graph: pairwise comparable vertices form a chain.
Throughout the paper homology has
coefficients in an arbitrary fixed field $\F$, for example
$\mathbb Q$, $\mathbb R$, or a finite field $\mathbb F_p$.
All Morse inequalities use the same field for the local groups
and the global Betti numbers. We use ordinary homology for
$K(D)$ and augmented reduced homology for local links. The empty
complex has $\widetilde H_{-1}=\F$ and no other reduced homology.
A complex is \emph{acyclic over $\F$} if all its augmented reduced
homology groups vanish; in particular it is nonempty.

The \emph{dimension} $d(D)=\dim K(D)$ is the maximum length of a
directed path, measured in arrows. The \emph{depth} $s(v)$ is the
maximum length of such a path starting at $v$.
For complexes on disjoint vertex sets, $A*B$ denotes their simplicial
join: its simplices are unions of a simplex from each factor,
with empty faces allowed. In our augmented convention,
$A*\varnothing=A$.

\subsection{Basins, hills, and the Maxwell graph}
For $U\subseteq V$, let $D[U]$ denote the induced directed subgraph.
For any directed graph $G$, we write
\[
 \begin{aligned}
 \Sink(G)&=\{v\in V(G):\text{no arrow of $G$ starts at $v$}\},\\
 \Source(G)&=\{v\in V(G):\text{no arrow of $G$ ends at $v$}\}.
 \end{aligned}
\]
These are its sets of sinks and sources. An isolated vertex belongs
to both sets. For an induced subgraph, these conditions are tested
using only its own arrows; a vertex can therefore become a sink or
source after other vertices have been removed.

\begin{definition}
The \emph{basin} and \emph{hill} of a vertex $v$ are
\[
 B_D(v)=D[\{u:u\reach v\}],\qquad
 H_D(v)=D[\{u:v\reach u\}].
\]
The sets of reachable sinks and preceding sources are
\[
 Z_D(v)=\{s\in\Sink(D):v\reach s\},\qquad
 T_D(v)=\{t\in\Source(D):t\reach v\}.
\]
The \emph{basin boundary} and \emph{hill boundary} are
\[
 \Gamma_B(D)=D[\{v:|Z_D(v)|\ge2\}],\qquad
 \Gamma_H(D)=D[\{v:|T_D(v)|\ge2\}].
\]
\end{definition}

Thus the basin boundary consists of vertices from which descending
paths lead to different minima. Basins may overlap. The word
``boundary'' refers to this combinatorial definition.

\begin{definition}
The descending and ascending boundary hierarchies are
\[
 \begin{array}{lll}
 D_B^0=D,&D_B^{j+1}=\Gamma_B(D_B^j),& B_j=\Sink(D_B^j),\\
 D_H^0=D,&D_H^{j+1}=\Gamma_H(D_H^j),& H_j=\Source(D_H^j).
 \end{array}
\]
The boundary of the empty graph is empty. The \emph{topographically
essential set} is
\[
 W(D)=\bigcup_{j\ge0}B_j\ \cup\ \bigcup_{j\ge0}H_j.
\]
\end{definition}

At each step we use the extrema of the current stage. Since a nonempty
stage loses its extrema at the next step, both hierarchies terminate.
A vertex may belong to both a descending and an ascending level.

\begin{definition}
For $U\subseteq V$, let $D_U$ be the downward Hasse diagram of $P(D)|_U$.
There is an arrow $u\to v$ exactly when $v<u$ and no element of $U$
lies strictly between them. The \emph{Maxwell graph} is
\[
 \M(D)=D_{W(D)}.
\]
\end{definition}

An arrow of $D_U$ can represent a path through omitted vertices.
The Hasse diagram is the transitive reduction of the restricted
comparability graph \cite{Aho}.

\subsection{Local Morse data}
For $v\in P$, let $P_{<v}$ and $P_{>v}$ denote its strict lower and
upper sets.

\begin{definition}
The lower and upper order links, local groups, and multiplicities are
\[
 L^-(v)=\Delta(P_{<v}),\qquad L^+(v)=\Delta(P_{>v}),
\]
\[
 E_r^\pm(v)=\widetilde H_{r-1}(L^\pm(v);\F),\qquad
 \mu_r^\pm(v)=\dim_\F E_r^\pm(v)\quad(r\ge0).
\]
A vertex is \emph{descending critical} if some $\mu_r^-(v)$ is nonzero,
and \emph{ascending critical} if some $\mu_r^+(v)$ is nonzero.
Otherwise it is regular in the corresponding direction.
Write $C^-(D)$ and $C^+(D)$ for these two critical sets.
\end{definition}

The links contain all strict descendants or ancestors.
Regularity requires every local group to vanish; a zero alternating
sum of their dimensions is insufficient. We call the degree $r$ of
a local generator its Morse degree. It need not agree with the
boundary level of the vertex.

The descending Morse numbers are $m_r(D)$ from \eqref{eq:counts}.
The ascending ones are
\[
 m_r^+(D)=\sum_{v\in V}\mu_r^+(v)=m_r(D^{\mathrm{op}}).
\]
The order complexes of $P$ and its opposite have the same simplices,
so they have the same Betti numbers.

\subsection{Morse and Poincar\'e polynomials}\label{sec:polynomials}
We collect the notation used in the examples:
\[
 \begin{array}{ll}
 m_D(t)=\displaystyle\sum_{r\ge0}m_r(D)t^r
   &\text{Morse polynomial},\\[3pt]
 P_D(t)=\displaystyle\sum_{r\ge0}\beta_r(D)t^r
   &\text{Poincar\'e polynomial}.
 \end{array}
\]
Thus $P(D)$ is a poset, while $P_D(t)$ is its homological generating
polynomial. The polynomial $Q_D(t)$ is determined by
\begin{equation}\label{eq:polynomial}
 m_D(t)-P_D(t)=(1+t)Q_D(t).
\end{equation}
Theorem~\ref{thm:main} implies that $Q_D$ is a polynomial with
nonnegative integer coefficients; a direct proof is given in
Corollary~\ref{cor:polynomial}. For $q\ge0$,
\[
 [t^q]Q_D(t)=
 \sum_{r=0}^q(-1)^{q-r}\bigl(m_r(D)-\beta_r(D)\bigr).
\]
Thus its coefficients record the differences in the strong Morse
inequalities, and $Q_D=0$ exactly when every Morse number equals
the corresponding Betti number. Also
\[
 \sum_r\bigl(m_r(D)-\beta_r(D)\bigr)=2Q_D(1).
\]
For the ascending data, write $m_D^+(t)=\sum_r m_r^+(D)t^r$ and
$m_D^+(t)-P_D(t)=(1+t)Q_D^+(t)$.
Both $m_D$ and $Q_D$ may depend on the field, as does $P_D$.

\subsection{The homological Maxwell graph}
\begin{definition}\label{def:homological}
The \emph{homological essential set} and \emph{homological Maxwell graph}
are
\[
 S_\F(D)=W(D)\cup C^-(D)\cup C^+(D),\qquad
 \Mh(D)=D_{S_\F(D)}.
\]
The graph retains the labels $B_j,H_j$ and the local groups
$E_r^\pm(v)$ at its vertices.
\end{definition}

Some vertices are retained only because they belong to $W(D)$ and
may have zero local groups in both directions. The set $S_\F(D)$
can depend on the coefficient field.

Vertices outside $S_\F(D)$ contribute zero to $m_r$ and $m_r^+$.
Proposition~\ref{prop:preservation} shows that the local groups at
retained vertices can also be computed in the restricted order
$P(D)|_{S_\F(D)}$.

\subsection{A vertex missed by the original hierarchy}\label{sec:tree-example}
Consider the nine-vertex directed tree $T$ with arrows
\[
 a\to p,\ b\to p,\ p\to x,\ c\to x,\
 x\to q,\ x\to d,\ q\to e,\ q\to f.
\]
Its only nonempty levels are
\[
 B_0=\{d,e,f\},\quad B_1=\{q\},\qquad
 H_0=\{a,b,c\},\quad H_1=\{p\}.
\]
Hence the original Maxwell graph deletes $x$.

\begin{figure}[htbp]
\centering
\begin{tikzpicture}[>=Stealth,scale=1.08,every node/.style={font=\normalsize}]
\node (a) at (-1,3) {$a$};\node (b) at (0,3) {$b$};
\node (c) at (1.4,3) {$c$};\node (p) at (-.5,2) {$p$};
\node[draw,circle,inner sep=3pt] (x) at (.3,1) {$x$};
\node (q) at (-.5,0) {$q$};\node (d) at (1.4,0) {$d$};
\node (e) at (-1,-1) {$e$};\node (f) at (0,-1) {$f$};
\foreach \s/\t in {a/p,b/p,p/x,c/x,x/q,x/d,q/e,q/f}
 \draw[->] (\s)--(\t);
\draw[->,thick] (2.2,.8)--(3.5,.8);\node at (2.85,1.2) {$\M$};
\node (aa) at (4.2,3) {$a$};\node (bb) at (5.2,3) {$b$};
\node (cc) at (6.6,2) {$c$};\node (pp) at (4.7,2) {$p$};
\node (qq) at (4.7,0) {$q$};\node (dd) at (6.6,0) {$d$};
\node (ee) at (4.2,-1) {$e$};\node (ff) at (5.2,-1) {$f$};
\foreach \s/\t in {aa/pp,bb/pp,pp/qq,pp/dd,cc/qq,cc/dd,qq/ee,qq/ff}
 \draw[->] (\s)--(\t);
\end{tikzpicture}
\caption{The original reduction removes $x$ and changes reachability
homology. The homological Maxwell graph retains $x$ and equals the
tree on the left.}
\label{fig:tree}
\end{figure}

The vertex $x$ is comparable with every vertex of $T$, so $K(T)$ is
a cone. After removing $x$, every element of $\{a,b,p,c\}$ is above
every element of $\{q,d,e,f\}$. The first set has order complex equal
to the path $a-p-b$ and an isolated $c$; the second gives the path
$e-q-f$ and an isolated $d$. Each is homotopy equivalent to $S^0$.
Their join is the order complex of the original Maxwell graph, hence
\[
 K(\M(T))\simeq S^0*S^0\cong S^1.
\]
The original reduction has changed the first Betti number from zero
to one.

Both links at $x$ have two contractible components, so
$\mu_1^-(x)=\mu_1^+(x)=1$. The modified construction retains $x$.
The nonzero descending contributions are
\[
 \mu_0^-(d)=\mu_0^-(e)=\mu_0^-(f)=1,\qquad
 \mu_1^-(q)=\mu_1^-(x)=1,
\]
with all others zero. Thus $S_\F(T)=V(T)$,
$(m_0,m_1)=(3,2)$, and $(\beta_0,\beta_1)=(1,0)$.
The homological Maxwell graph keeps the tree intact.

\section{Proofs}\label{sec:proofs}
\subsection{The Morse inequalities}
Choose a linear extension
$v_1,\ldots,v_n$ of $P(D)$ in which strict lower elements appear first.
Set
\[
 K_i=K(\{v_1,\ldots,v_i\}),\qquad K_0=\varnothing.
\]
At step $i$ we attach all chains whose greatest element is $v_i$.

\begin{lemma}[Local attachment]\label{lem:attachment}
For every $r\ge0$,
\[
 H_r(K_i,K_{i-1})\cong E_r^-(v_i).
\]
\end{lemma}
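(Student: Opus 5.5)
We identify $K_i$ with the union of $K_{i-1}$ and the closed star of $v_i$, and then use excision to reduce the relative group to a cone over the lower link. Because the ordering $v_1,\ldots,v_n$ is a linear extension of $P(D)$ in which strict lower elements come first, every element of $P_{<v_i}$ lies in $\{v_1,\ldots,v_{i-1}\}$. Every element of $\{v_1,\ldots,v_i\}$ that is comparable with $v_i$ therefore lies below it. It follows that a simplex of $K_i$ not in $K_{i-1}$ is exactly a chain containing $v_i$, and $v_i$ is its greatest element. Such a chain has the form $\sigma\cup\{v_i\}$ with $\sigma$ a (possibly empty) chain of $P_{<v_i}$. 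We obtain
\[
 K_i=K_{i-1}\cup C_i,\qquad C_i=v_i*L^-(v_i),\qquad K_{i-1}\cap C_i=L^-(v_i),
\]
where $C_i$ is the closed cone with apex $v_i$. The intersection statement holds because a simplex of the cone avoiding $v_i$ is a chain of $P_{<v_i}$, and every such chain lies in $K_{i-1}$.

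Since $K_i$ is the union of the subcomplexes $K_{i-1}$ and $C_i$, simplicial excision, or equivalently the isomorphism of relative simplicial chain complexes $C_*(K_i,K_{i-1})\cong C_*(C_i,L^-(v_i))$ (both are spanned by the simplices containing $v_i$), gives
\[
 H_r(K_i,K_{i-1})\cong H_r\bigl(C_i,L^-(v_i)\bigr).
\]

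We then apply the long exact sequence of the pair $(C_i,L^-(v_i))$ in reduced homology. The cone $C_i$ is contractible, so its reduced homology vanishes, and the connecting map is an isomorphism $H_r(C_i,L^-(v_i))\cong\widetilde H_{r-1}(L^-(v_i))=E_r^-(v_i)$.

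The one point needing care is the boundary case $L^-(v_i)=\varnothing$, that is, when $v_i$ is a sink, together with $r=0$. We handle it directly at the chain level. The relative chain complex $C_*(C_i,L^-(v_i))$ is isomorphic, with degree shifted by one, to the augmented chain complex of $L^-(v_i)$, via $\sigma\cup\{v_i\}\mapsto\pm\sigma$, where the empty simplex corresponds to the vertex $v_i$. The boundary of $\sigma\cup\{v_i\}$ modulo $L^-(v_i)$ is $\pm(\partial\sigma)\cup\{v_i\}$. When $\sigma$ is a vertex, this face is $\{v_i\}$, which matches the augmentation. Hence $H_r(K_i,K_{i-1})\cong\widetilde H_{r-1}(L^-(v_i))$ with the augmented convention $\widetilde H_{-1}(\varnothing)=\F$.

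This chain-level identification gives a single uniform proof covering all cases, including sinks, where the relative group is $H_0(\mathrm{pt},\varnothing)=\F$. Checking the sign convention is routine. I expect no real obstacle beyond verifying that $K_{i-1}\cap C_i$ is the full lower link, and that verification uses the linear-extension hypothesis.
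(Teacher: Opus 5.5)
Your proposal is correct and is essentially the paper's argument: the simplices with greatest vertex $v_i$ form the cone $v_i*L^-(v_i)$, which meets $K_{i-1}$ in $L^-(v_i)$, so the relative chains are the augmented chains of $L^-(v_i)$ shifted up one degree, and this covers sinks as well. Your chain-level identification already proves the lemma by itself; the excision and long-exact-sequence step is redundant but harmless.
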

\begin{proof}
The new closed star is the cone $v_i*L^-(v_i)$, and its intersection
with $K_{i-1}$ is $L^-(v_i)$. Therefore its relative chains are the
augmented chains of $L^-(v_i)$ shifted up by one degree.
This also covers a sink: the empty lower link supplies one relative
generator in degree zero.
\end{proof}

\begin{proof}[Proof of Theorem~\ref{thm:main}]
Let $a_{i,r}$ be the rank of the connecting map
\[
 H_r(K_i,K_{i-1})\longrightarrow H_{r-1}(K_{i-1}).
\]
Here $a_{i,0}=0$ (ordinary $H_{-1}=0$). Set $b_r(K_i)=\dim H_r(K_i)$.
Exactness gives
\[
 b_r(K_i)=b_r(K_{i-1})+\mu_r^-(v_i)-a_{i,r}-a_{i,r+1}.
\]
Summing over the vertices, with $A_r=\sum_i a_{i,r}$, gives
\begin{equation}\label{eq:defect}
 m_r(D)-\beta_r(D)=A_r+A_{r+1},\qquad A_0=0.
\end{equation}
Since each $A_r$ is a sum of ranks, the weak inequalities follow.
Alternating sums cancel $A_1,\ldots,A_q$, while $A_0=0$, giving
\[
 \sum_{r=0}^q(-1)^{q-r}\bigl(m_r(D)-\beta_r(D)\bigr)=A_{q+1}\ge0.
\]
All groups vanish in high degrees, giving eventual equality.
\end{proof}

\begin{corollary}\label{cor:polynomial}
The polynomial $Q_D$ defined by \eqref{eq:polynomial} has nonnegative
integer coefficients. In particular,
$\sum_r(-1)^r m_r(D)=\chi(K(D))$.
\end{corollary}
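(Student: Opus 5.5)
The plan is to read off $Q_D$ directly from the defect identity \eqref{eq:defect} in the proof of Theorem~\ref{thm:main}, instead of inverting the alternating sums. That proof gives, for every $r\ge0$,
\[
 m_r(D)-\beta_r(D)=A_r+A_{r+1},\qquad A_0=0,
\]
where each $A_r=\sum_i a_{i,r}$ is a sum of ranks of connecting maps, hence a nonnegative integer. Only finitely many $A_r$ are nonzero, since all relative groups $H_r(K_i,K_{i-1})$ vanish above $\dim K(D)$.

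Set $\widetilde Q(t)=\sum_{r\ge0}A_{r+1}t^r$. This is a polynomial with nonnegative integer coefficients. We compute
\[
 (1+t)\widetilde Q(t)=\sum_{r\ge0}A_{r+1}t^r+\sum_{r\ge1}A_r t^r
 =\sum_{r\ge0}(A_r+A_{r+1})t^r,
\]
using $A_0=0$ to add the $r=0$ term to the second sum. By \eqref{eq:defect} the right-hand side equals $m_D(t)-P_D(t)$. The ring $\mathbb Z[t]$ has no zero divisors, so the quotient by $1+t$ is unique, and $Q_D=\widetilde Q$. This agrees with the formula $[t^q]Q_D=A_{q+1}$ obtained in the proof of the theorem.

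For the Euler characteristic, we evaluate \eqref{eq:polynomial} at $t=-1$. The factor $1+t$ vanishes, so $\sum_r(-1)^r m_r(D)=P_D(-1)=\sum_r(-1)^r\beta_r(D)$. Over a field, the last sum is $\chi(K(D))$.

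The only step needing care is the boundary term at $r=0$. The identity $A_0=0$ holds because ordinary, not augmented, homology is used for $K_{i-1}$. This matters in particular for $K_0=\varnothing$, where a sink contributes a relative class in degree $0$ whose connecting map lands in $H_{-1}=0$. Once this is checked, no genuine obstacle remains.
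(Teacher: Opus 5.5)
Your proposal is correct and follows essentially the same route as the paper: take $Q_D(t)=\sum_{r\ge0}A_{r+1}t^r$ from the defect identity \eqref{eq:defect}, then evaluate at $t=-1$. You have filled in the routine checks that the paper leaves implicit, namely the role of $A_0=0$, finiteness of the sum, uniqueness of division by $1+t$, and $\chi=\sum_r(-1)^r\beta_r$ over a field.
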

\begin{proof}
Take $Q_D(t)=\sum_{r\ge0}A_{r+1}t^r$ in \eqref{eq:defect},
and then evaluate at $t=-1$.
\end{proof}

\subsection{Preservation under the Maxwell reduction}
We first prove that retaining $W(D)$ preserves both boundary
hierarchies, and then show that regular vertices can be deleted
without changing the local homology groups.

\begin{lemma}[The boundary hierarchies]\label{lem:hierarchy}
If $W(D)\subseteq U\subseteq V(D)$, then
\[
 B_j(D_U)=B_j(D),\qquad H_j(D_U)=H_j(D)
\]
for every $j$. The stage vertex sets are
$U\cap V(D_B^j)$ and $U\cap V(D_H^j)$, respectively.
\end{lemma}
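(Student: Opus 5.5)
By the symmetry $D\mapsto D^{\mathrm{op}}$ (which exchanges sinks with sources and basins with hills, and satisfies $(D_U)^{\mathrm{op}}=(D^{\mathrm{op}})_U$), it suffices to treat the descending hierarchy. The plan is to prove by induction on $j$ the stronger statement that the $j$-th stage of $D_U$ has vertex set $U_j:=U\cap V(D_B^j)$, and that its reachability relation is the restriction of the reachability of $D_B^j$. We then show that the sinks of that stage are exactly $B_j$.

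The first point to settle is how reachability in induced subgraphs relates to the original order. For $j=0$, reachability in $D_U$ is the restriction of $P(D)$ to $U$, since Hasse arrows are covering relations in $P|_U$ and chains compose. At later stages, however, $D_B^j$ is an induced subgraph of $D$, and its reachability can be strictly weaker than $P|_{V(D_B^j)}$. So the invariant I would carry is the following:
\begin{quote}
for $u,v\in U_j$, there is a directed path from $u$ to $v$ in the $j$-th stage of $D_U$ if and only if there is one in $D_B^j$.
\end{quote}

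Granting this invariant, the key step is to compare sink sets and reachable sinks. Write $G=D_B^j$ and let $G'$ be the $j$-th stage of $D_U$.

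\emph{Sinks.} Every sink of $G$ lies in $B_j\subseteq W\subseteq U$. A vertex $u\in U_j$ is a sink of $G'$ if and only if it reaches no other vertex of $U_j$ in $G'$, that is, no other vertex of $U_j$ in $G$. If $u$ is not a sink of $G$, then following arrows in the finite acyclic graph $G$ leads from $u$ to some sink of $G$, which lies in $U_j$. Hence $u$ is not a sink of $G'$. Conversely, a sink of $G$ reaches nothing in $G$, so it is a sink of $G'$. Thus $\Sink(G')=B_j$.

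\emph{Reachable sinks.} By the invariant, $Z_{G'}(u)=Z_G(u)$ for every $u\in U_j$. Therefore the basin boundary of $G'$ has vertex set $U\cap V(D_B^{j+1})$, which is the claimed stage set $U_{j+1}$.

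It remains to propagate the invariant to stage $j+1$. Both stages at $j+1$ are induced subgraphs, and this is where I expect the main obstacle. A path in $D_B^{j+1}$ between vertices of $U$ may pass through deleted vertices $w\notin U$. Conversely, a path in the $(j+1)$-th stage of $D_U$ uses Hasse arrows of $P|_U$, each of which represents a path in $D$ through vertices that might have left $D_B^{j+1}$.

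To handle the second direction, I would show that the intermediate vertices of a representing path stay in the boundary. Suppose $x,y\in U_{j+1}$ are joined by a $G$-path. Any intermediate vertex $w$ on that path reaches $y$, and therefore reaches every sink of $G$ that $y$ reaches, so $|Z_G(w)|\ge |Z_G(y)|\ge 2$. Hence $w$ lies in $D_B^{j+1}$, and the whole path stays in that stage.

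For the first direction, I would decompose a $D_B^{j+1}$-path into segments between consecutive vertices of $U$. Each such segment yields a chain of Hasse arrows of $P|_U$ whose intermediate vertices lie in $U$. By the same monotonicity argument ($|Z|$ can only grow going upward), these intermediate vertices again lie in $U_{j+1}$.

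What needs care is that stage-$j$ paths also run through vertices of stage $j$ only. I would therefore phrase the induction directly with the statement that the $j$-th stage of $D_U$ equals the induced subgraph on $U_j$ of the Hasse diagram of $P|_U$. The monotonicity of $Z$ along paths is then applied once per stage, and each inductive step reduces to the single observation
\[
 |Z_G(w)|\ \ge\ |Z_G(y)|\quad\text{whenever } w \text{ reaches } y \text{ in } G.
\]
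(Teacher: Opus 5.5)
Your proof is correct in outline and rests on the same key fact as the paper's proof: reaching a vertex that has at least two reachable sinks forces at least two reachable sinks, so each basin boundary is predecessor-closed. Your comparison of sinks (using $B_j\subseteq W\subseteq U$) and of reachable sink sets is exactly the paper's inductive step.

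However, you misplace the difficulty. The assertion that the reachability of $D_B^j$ ``can be strictly weaker than $P|_{V(D_B^j)}$'' is false. Iterating your own observation shows that every $V(D_B^j)$ is predecessor-closed in $D$. Suppose $u\reach v$ in $D$ with $v\in V(D_B^{j+1})$. By induction the whole path lies in $D_B^j$, so $Z_{D_B^j}(u)\supseteq Z_{D_B^j}(v)$. Hence $u\in V(D_B^{j+1})$, and the same holds for every vertex on the path. Consequently every directed path of $D$ ending in a stage stays in that stage, and the internal reachability of $D_B^j$ is the restricted order. The same argument applies to the DAG $D_U$, whose reachability is $P|_U$, so each stage of $D_U$ also carries the restricted order. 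This is how the paper proceeds. Once the vertex sets agree, your reachability invariant holds automatically, and nothing needs to be propagated.

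As written, your ``first direction'' has a gap. Take consecutive $U$-vertices $u_1>u_2$ on a $D_B^{j+1}$-path, and the Hasse chain of $P|_U$ joining them. Its intermediate vertices $w$ need not lie on that path; you know only that $w\in U$ and $w\reach u_2$ in $D$. Monotonicity of $Z_G$ or $Z_{G'}$ can be applied only if $w$ is already known to be a vertex of that stage. That is exactly the global predecessor-closure you have not established. Your closing reformulation does not supply it, because a stage of $D_U$ is by definition the induced subgraph on its vertex set.

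Within your own framework, the fix is to use the invariant in this direction as well. A $D_B^{j+1}$-path from $x$ to $y$ is a $G$-path, so $x\reach y$ in $G'$. Each vertex of such a $G'$-path reaches $y$ in $G'$, and $Z_{G'}=Z_G$ on $U_j$. So each such vertex has $|Z_{G'}|\ge|Z_G(y)|\ge2$, and the path lies in the next stage of $D_U$. With this repair, or with the global closure above, the proof is complete.
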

\begin{proof}
The basin boundary is predecessor-closed: if $u\reach v$ and $v$
reaches two sinks, then $u$ reaches them as well. Repetition shows that
each $D_B^j$ is predecessor-closed in $D$. Consequently every directed
path ending in that stage remains in it, so its internal reachability
is the restricted original order.

Every comparable pair in a finite poset is joined by a chain of covers.
Hence $D_U$ realizes exactly the order $P(D)|_U$. Induct on $j$.
At stage zero the asserted vertex set is immediate. Suppose it is
known at stage $j$. Every vertex of the original stage reaches a sink
in $B_j$, and all those sinks belong to $W\subseteq U$. The restricted
stage therefore has exactly the same sinks. Each retained vertex
reaches the same stage sinks as before, so the condition of reaching
at least two gives the required vertex set at stage $j+1$.
Reverse arrows for the ascending statement.
\end{proof}

\begin{lemma}[Deletion of a regular vertex]\label{lem:deletion}
Suppose $L^-(z)$ is acyclic over $\F$. Deleting $z$ from a finite
poset $P$ preserves the homology of $\Delta P$ and the reduced homology
of every remaining lower order link. The upper statement holds
when $L^+(z)$ is acyclic over $\F$.
\end{lemma}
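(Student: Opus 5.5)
Two things have to be shown: that deleting $z$ preserves the homology of $\Delta P$, and that it preserves the reduced homology of every remaining lower link. Write $P'=P\setminus\{z\}$.

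\emph{Global homology.} Choose a linear extension of $P$ in which $z$ is added last among the elements of $P_{\le z}$. This can be done so that $z$ is the last element overall: order $P'$ by any linear extension, then insert $z$ at the end. That is not quite valid, because $z$ may have ancestors, which must come after it.

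Instead argue directly with the pair $(\Delta P,\Delta P')$. The simplices of $\Delta P$ not in $\Delta P'$ are the chains containing $z$. Each such chain splits uniquely as $\sigma\cup\{z\}\cup\tau$ with $\sigma$ a chain in $P_{<z}$ and $\tau$ a chain in $P_{>z}$ (either possibly empty). Hence $\Delta P$ is obtained from $\Delta P'$ by attaching the closed star of $z$, namely $z*\lk(z)$, where
\[
\lk(z)=L^-(z)*L^+(z).
\]
So the relative chain complex is the augmented chain complex of $\lk(z)$ shifted up by one degree, as in Lemma~\ref{lem:attachment}. By the Künneth formula for joins over a field,
\[
\widetilde H_n(A*B)\cong\bigoplus_{i+j=n-1}\widetilde H_i(A)\otimes\widetilde H_j(B),
\]
which holds with the augmented convention and in particular gives $A*\varnothing=A$. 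Since $L^-(z)$ is acyclic over $\F$, the join $\lk(z)$ is acyclic. Therefore $H_*(\Delta P,\Delta P')=0$, and the long exact sequence gives $H_*(\Delta P')\cong H_*(\Delta P)$.

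\emph{Remaining lower links.} Take $v\neq z$. If $z\not<v$, then $P'_{<v}=P_{<v}$ and there is nothing to prove. If $z<v$, apply the same argument inside the poset $Q=P_{<v}$: delete $z$ from $Q$. The lower link of $z$ in $Q$ is still $\Delta(P_{<z})$, because everything below $z$ lies below $v$. Its upper link in $Q$ is $\Delta(P_{>z}\cap P_{<v})$, which may be empty. The join with an acyclic complex is still acyclic in the augmented convention, since the $\widetilde H_{-1}(\varnothing)=\F$ case is covered by $A*\varnothing=A$. So the pair $(\Delta Q,\Delta(Q\setminus z))$ has vanishing relative homology, and the long exact sequence of reduced homology preserves $\widetilde H_*$. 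The degree $-1$ group is the only edge case, and it causes no problem: $Q\setminus z$ is nonempty, because $Q$ contains the nonempty $P_{<z}$, since $L^-(z)$ acyclic forces it to be nonempty.

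\emph{Upper statement.} Apply the result to the opposite poset. It has the same order complex, and lower links become upper links.

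\emph{Main obstacle.} The one point needing care is the join Künneth formula in the augmented setting, including the empty cases. I would verify it at chain level: the augmented chain complex of $A*B$ is the shifted tensor product of the augmented chain complexes of $A$ and $B$. Over a field, acyclicity of one tensor factor kills the homology of the product.
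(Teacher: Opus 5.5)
Your proof is correct and follows the paper's own argument. You identify $H_*(\Delta P,\Delta P\setminus z)$ with the shifted reduced homology of $\lk z=L^-(z)*L^+(z)$, show it vanishes by the join formula over a field, and rerun the same argument inside $P_{<v}$, where the link becomes $L^-(z)*\Delta(\{u:z<u<v\})$. The abandoned linear-extension opening and the degree $-1$ nonemptiness check are unnecessary: the quotient of augmented chain complexes is the relative complex even when the subcomplex is empty, so you can simply drop both.
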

\begin{proof}
For a simplicial complex $K$ and vertex $z$,
\begin{equation}\label{eq:relative}
 H_r(K,K-z)\cong
 \widetilde H_{r-1}(\lk_K z).
\end{equation}
This follows by taking the cone on the link modulo its base.
For an order complex,
\[
 \lk_{\Delta P}z=L^-(z)*L^+(z).
\]
Over a field, a join with an acyclic factor is acyclic: the augmented
join formula is
\[
 \widetilde H_k(A*B)\cong
 \bigoplus_{i+j=k-1}
 \widetilde H_i(A;\F)\otimes_\F\widetilde H_j(B;\F).
\]
The indices include $-1$. With our augmented convention,
$A*\varnothing=A$, and the term $j=-1$ gives
$\widetilde H_k(A*\varnothing)\cong\widetilde H_k(A)$.
The relative groups in \eqref{eq:relative} vanish, proving preservation
of the homology of the full complex.

The lower link at a remaining vertex $v$ changes only if $z<v$.
Inside $L^-(v)$, the link of $z$ is
\[
 L^-(z)*\Delta(\{u:z<u<v\}).
\]
Its first factor is acyclic. The same relative argument proves
preservation of the reduced homology of $L^-(v)$.
Reversing the order proves the upper statement.
\end{proof}

\begin{proposition}[Preservation of the data]\label{prop:preservation}
Let $S=S_\F(D)$ and $Q=P(D)|_S$. Inclusion $\Delta Q\hookrightarrow K(D)$
induces homology isomorphisms. At every retained vertex it also induces
isomorphisms on all lower and upper local groups. The levels $B_j,H_j$
are unchanged.
\end{proposition}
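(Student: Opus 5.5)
The plan is to delete the vertices of $V\setminus S$ one at a time, applying Lemma~\ref{lem:deletion} at each step, and then to read off the levels from Lemma~\ref{lem:hierarchy}. Enumerate $V\setminus S=\{z_1,\dots,z_k\}$ in any order, and set $P_0=P(D)$ and $P_\ell=P(D)|_{V\setminus\{z_1,\dots,z_\ell\}}$, so that $P_k=Q$. Every restriction of the reachability order is again a finite poset. Moreover, the strict lower and upper sets of a vertex in $P_\ell$ are just its strict lower and upper sets in $P(D)$ intersected with the remaining vertices. So Lemma~\ref{lem:deletion}, which is stated for an arbitrary finite poset, applies to each $P_\ell$ without change.

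The key invariant I will carry through the induction is the following. For every vertex $v$ remaining in $P_\ell$, the inclusions $L^\pm_{P_\ell}(v)\hookrightarrow L^\pm_{P(D)}(v)$ induce isomorphisms on augmented reduced homology, and $\Delta P_\ell\hookrightarrow K(D)$ induces isomorphisms on homology. For $\ell=0$ this is trivial.

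For the inductive step, consider $z=z_{\ell+1}$. Since $z\notin C^-(D)\cup C^+(D)$, both of its links in $P(D)$ are acyclic over $\F$. By the invariant, both of its links in $P_\ell$ are then acyclic as well. Hence Lemma~\ref{lem:deletion} applies to $z$ in $P_\ell$ in both its lower and its upper form.
\begin{itemize}
\item The lower form preserves $H_*(\Delta P_\ell)$ and the lower links of the remaining vertices.
\item The upper form, applied to the same $z$, preserves the upper links of the remaining vertices.
\end{itemize}
I need to check that these preservations are induced by inclusion. This follows because the proof of Lemma~\ref{lem:deletion} shows that the relative groups $H_*(K,K-z)$ vanish, so the long exact sequence of the pair makes the inclusion $K-z\hookrightarrow K$ an isomorphism. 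Composing with the inductive isomorphisms gives the invariant for $P_{\ell+1}$.

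Taking $\ell=k$ yields the first two assertions: $\Delta Q\hookrightarrow K(D)$ is a homology isomorphism, and at every retained vertex the local groups $E_r^\pm(v)$ computed in $Q$ agree, via inclusion, with those computed in $P(D)$.

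For the levels, I observe that $D_S$ is by definition the Hasse diagram of $P(D)|_S$. Since $W(D)\subseteq S$, Lemma~\ref{lem:hierarchy} gives $B_j(D_S)=B_j(D)$ and $H_j(D_S)=H_j(D)$ for all $j$.

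The main point requiring care is the persistence of regularity. The hypothesis that $z_{\ell+1}$ has acyclic links is given only in the original poset, while the deletion is performed in $P_\ell$. This is exactly why the invariant must track \emph{both} lower and upper links, and not only the ones Lemma~\ref{lem:deletion} mentions for a single direction. It is also why $S$ must contain $C^-\cup C^+$: deleting a vertex that is regular in only one direction could destroy the other-direction links of later vertices and break the induction.
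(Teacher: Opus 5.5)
Your proof is correct and follows the paper's own argument: delete the vertices outside $S$ one at a time using Lemma~\ref{lem:deletion}, observe that regularity in both directions persists in the current restricted order, and obtain the levels from Lemma~\ref{lem:hierarchy} since $W\subseteq S$. Your explicit invariant, which tracks both link directions and records that the isomorphisms are induced by inclusion, spells out details the paper leaves implicit.
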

\begin{proof}
Delete the vertices outside $S$ one at a time. They are initially
regular in both directions. At each step, Lemma~\ref{lem:deletion}
preserves the global homology and all remaining local groups.
Thus every vertex still to be deleted is regular in the current
restricted order, and induction gives the homology isomorphisms.
The boundary levels are preserved by
Lemma~\ref{lem:hierarchy}, since $W\subseteq S$.
\end{proof}

\begin{corollary}[Idempotence]\label{cor:idempotence}
The homological Maxwell graph is canonical for $P(D)$ and $\F$, and
\[
 \Mh(\Mh(D))=\Mh(D).
\]
Its Morse numbers and Betti numbers equal those of $D$.
\end{corollary}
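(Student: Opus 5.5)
The plan is to identify $\Mh(D)$ with the Hasse diagram $D'=D_S$ of $Q=P(D)|_S$, where $S=S_\F(D)$, and then to check that $S_\F(D')=S$. Once this is done, idempotence follows at once, because $\Mh(D')=D'_{S_\F(D')}$. The Hasse diagram of the restriction of $P(D')=Q$ to all of $S$ is $D'$ itself: by the argument in Lemma~\ref{lem:hierarchy}, $D'$ realizes exactly the order $Q$, and a Hasse diagram is its own transitive reduction.

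Canonicity is immediate from the definitions. $W(D)$ depends only on reachability, via sinks, sources and the iterated boundaries, so it depends only on $P(D)$. The sets $C^\pm(D)$ are defined through the order links over $\F$. Hence $S$, and with it $D_S$, are determined by $P(D)$ and $\F$.

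The key step is to show that $S_\F(D')=S$. The inclusion $\supseteq$ is the easier half. By Lemma~\ref{lem:hierarchy} with $U=S\supseteq W(D)$, every level satisfies $B_j(D')=B_j(D)$ and $H_j(D')=H_j(D)$, so $W(D')=W(D)$. By Proposition~\ref{prop:preservation}, the local groups $E_r^\pm(v)$ computed in $Q$ are isomorphic to those computed in $P(D)$ for every $v\in S$. Hence $C^\pm(D')=C^\pm(D)\cap S=C^\pm(D)$, since $C^\pm(D)\subseteq S$. Together these give $S_\F(D')=W(D)\cup C^-(D)\cup C^+(D)=S$.

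The reverse inclusion $\subseteq$ needs no extra argument, because $V(D')=S$. I expect the main obstacle to be the preservation of the critical sets. Proposition~\ref{prop:preservation} covers exactly this, but I must be careful that it gives \emph{equality} of $W$ and not just inclusion. Lemma~\ref{lem:hierarchy} identifies each level $B_j,H_j$ as a set, and $W$ is the union of these levels, so equality holds.

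Finally, consider the Morse and Betti numbers. Proposition~\ref{prop:preservation} gives $\beta_r(D')=\beta_r(D)$, since $K(D')=\Delta Q$. For $m_r$, vertices outside $S$ are regular in $D$ and contribute $0$. The vertices of $S$ contribute $\mu_r^-(v)$, which is the same in $D$ and $D'$ by the local isomorphisms. Hence $m_r(D')=m_r(D)$, and likewise $m_r^+(D')=m_r^+(D)$.
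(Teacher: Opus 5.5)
Your proposal is correct and follows essentially the same route as the paper. Canonicity comes from the choice-free definitions, and $W(D')=W(D)$ follows from Lemma~\ref{lem:hierarchy}. Proposition~\ref{prop:preservation} preserves the critical sets and local groups, so $S_\F(D')=S$; the Hasse diagram of $Q$ on all of $S$ is $D'$ itself, and vertices outside $S$ contribute zero to the Morse sums. You just make explicit a few steps the paper leaves implicit, such as $P(D')=Q$ and $C^\pm(D')=C^\pm(D)\cap S$.
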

\begin{proof}
The vertex set and its Hasse diagram are defined without choices.
Proposition~\ref{prop:preservation} preserves $W$, both critical sets,
and their local groups. Their union in the reduced graph is therefore
exactly $S$. Taking its Hasse diagram again changes nothing.
Vertices outside $S$ contribute zero to the Morse sums.
\end{proof}

An order ideal contains all lower elements of each of its vertices;
an order filter contains all upper elements.

\begin{proposition}[Sublevel and superlevel homology]\label{prop:filtration}
For every order ideal or order filter $U\subseteq P(D)$, inclusion
\[
 K(S\cap U)\hookrightarrow K(U)
\]
induces homology isomorphisms. These isomorphisms commute with inclusions
between nested such sets.
\end{proposition}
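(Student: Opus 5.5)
We delete the vertices of $U\setminus S$ from $U$ one at a time, exactly as in the proof of Proposition~\ref{prop:preservation}, but now inside the restricted poset $P(D)|_U$. The key observation concerns an order ideal $U$ and a vertex $z\in U$. Since $U$ is downward closed, the lower link of $z$ computed in $P(D)|_U$ is the same complex $L^-(z)=\Delta(P_{<z})$ as in $P(D)$. If $z\notin S$, then $z$ is descending regular, so this link is acyclic over $\F$. Lemma~\ref{lem:deletion}, applied to the poset $P(D)|_U$ with only its lower hypothesis, then shows that deleting $z$ preserves the homology of the order complex. It also preserves the reduced homology of every remaining lower link. For an order filter we argue dually, using $L^+(z)$ and the upper half of Lemma~\ref{lem:deletion}; only one direction of regularity is needed in each case.

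The induction needs care, because after some deletions the current set $U'=U\setminus\{z_1,\dots,z_k\}$ is no longer an ideal. We therefore do not rely on the lower links being unchanged. Instead we carry the invariant supplied by Lemma~\ref{lem:deletion}. Every vertex still to be deleted has a lower link in $P|_{U'}$ whose reduced homology is isomorphic to that of its original lower link. The original lower link is acyclic, so the current one is acyclic as well. This is exactly the bookkeeping of Proposition~\ref{prop:preservation}. It is simpler here, since we track only the lower links and the full order is never needed. After all vertices of $U\setminus S$ are removed, the composite of the inclusions $K(S\cap U)\hookrightarrow\cdots\hookrightarrow K(U)$ induces isomorphisms. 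Each step is an isomorphism because the relative groups \eqref{eq:relative} vanish, and the long exact sequence then identifies the step as an isomorphism induced by inclusion.

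For naturality, let $U_1\subseteq U_2$ be nested sets. The square of inclusions
\[
K(S\cap U_1)\to K(S\cap U_2),\qquad K(U_1)\to K(U_2)
\]
commutes at the level of simplicial maps, because all four maps are inclusions of subcomplexes of $K(D)$. It therefore commutes in homology. The vertical maps are isomorphisms by the first part, so the isomorphisms commute with the horizontal induced maps. No compatibility between the deletion orders is needed, since the isomorphism is the one induced by the inclusion, independent of the order chosen.

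The step I expect to be the main obstacle is justifying that Lemma~\ref{lem:deletion} applies inside the restricted poset $P|_U$ at every stage. Its hypothesis refers to the current lower link, not the original one. The point to check is that the lemma's preservation of remaining lower links holds in any finite poset. Its proof uses only the join decomposition $L^-(z)*\Delta(\{u:z<u<v\})$, which is valid in $P|_{U'}$. Combined with the base case, where $U$ is an ideal, this closes the induction.
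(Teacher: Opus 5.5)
Your proof is correct and follows the paper's argument. You delete the vertices of $U\setminus S$ one at a time via Lemma~\ref{lem:deletion}, using lower links for ideals and upper links for filters, and you get naturality because all maps are inclusions of subcomplexes of $K(D)$. The only difference is bookkeeping. The paper deletes larger vertices first, so each deleted vertex still has its literal original lower link $L^-(z)$ and no invariant needs tracking. Your arbitrary deletion order works equally well because the link-preservation clause of the lemma carries acyclicity forward.
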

\begin{proof}
For an ideal, delete $U\setminus S$ with larger vertices first.
When $z$ is deleted, all of its original lower elements are still
present, so its lower link is the original acyclic $L^-(z)$.
Apply Lemma~\ref{lem:deletion} at each step.
For a filter use smaller vertices first and upper links.
For $U_1\subseteq U_2$, both composites
$K(S\cap U_1)\to K(U_2)$ are the same inclusion. This proves naturality.
\end{proof}

In particular, for a height function $f$ decreasing on arrows,
the sets $\{f\le t\}$ are ideals and $\{f\ge t\}$ are filters.
Hence the reduced and original order complexes have isomorphic
sublevel and superlevel persistent homology. The conclusion is a
homology isomorphism; acyclic links need not give homotopy equivalences.

\subsection{A Morse chain complex}
The local groups form the chain groups of a Morse complex.

\begin{proposition}\label{prop:chain}
There is a differential on
\[
 \mathcal C_r(D)=\bigoplus_{v\in S_\F(D)}E_r^-(v)
\]
making $\mathcal C_*(D)$ chain homotopy equivalent to $C_*(K(D);\F)$.
A nonzero component of the differential from the summand at $v$
to the summand at $u$ requires $u<v$.
\end{proposition}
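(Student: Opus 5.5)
We reduce first to the subposet $Q=P(D)|_S$ with $S=S_\F(D)$. By Proposition~\ref{prop:preservation}, the inclusion $\Delta Q\hookrightarrow K(D)$ is a homology isomorphism. Over a field, a quasi-isomorphism of bounded complexes of finite-dimensional vector spaces is a chain homotopy equivalence. The same proposition identifies the lower local groups at retained vertices computed in $Q$ with the original $E_r^-(v)$. It therefore suffices to build the Morse complex for $\Delta Q$, whose lower links are the original ones up to isomorphism.

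Choose a linear extension $v_1,\dots,v_k$ of $Q$ and filter $\Delta Q$ by $K_i=\Delta(\{v_1,\dots,v_i\})$, exactly as in Lemma~\ref{lem:attachment}. The filtration is exhaustive and bounded. By that lemma the associated graded piece $C_*(K_i)/C_*(K_{i-1})$ has homology $E_*^-(v_i)$, namely the shifted augmented chains of $L^-(v_i)$. We then apply the standard homological perturbation argument, in the form of algebraic Morse theory \cite{Skoldberg} or the Morse theory on posets of \cite{Minian}. Over $\F$, each graded piece splits as a direct sum of its homology and a contractible complex. We choose splittings, that is, strong deformation retract data $(\iota_i,\pi_i,h_i)$ on each graded piece.

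The total complex $C_*(\Delta Q)$ is then a filtered complex whose differential is the graded differential plus a strictly filtration-lowering perturbation $\delta$. The perturbation lemma applies because $h\delta$ is nilpotent: it strictly lowers the filtration index, and the filtration is finite. It transfers the structure to $\bigoplus_i E_*^-(v_i)$ with a differential $\partial=\sum_{n\ge0}\pi\,\delta(h\delta)^n\iota$, together with a chain homotopy equivalence to $C_*(\Delta Q)$. Composing with the equivalence to $C_*(K(D))$ gives the first claim.

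The main obstacle is the support condition. A component from $v$ to $u$ must require $u<v$ in the order, not merely that $u$ precedes $v$ in the linear extension. Each term of $\partial$ applied to the summand at $v_i$ is a composite of boundary maps and homotopies. It starts from chains of $v_i*L^-(v_i)$, and the boundary $\delta$ sends such chains into chains supported on $L^-(v_i)$, whose vertices are all $<v_i$. The plan is to choose the splitting data locally: $h_j$ should act within the chains of simplices with top vertex $v_j$, and $\pi_j$ should vanish outside them. By induction on $n$, every term $(h\delta)^n\iota$ started at $v_i$ is then supported on simplices whose top vertex is $\le v_i$. The final projection $\pi_j$ is nonzero only when the top vertex is $v_j$, so a nonzero component forces $v_j\le v_i$. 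Equality is excluded because $\delta$ lowers the filtration index, so $v_j<v_i$.

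We must check that this support is preserved under $h_j$. It is, since $h_j$ maps chains with top vertex $v_j$ to chains with top vertex $v_j$. We must also check that $\delta$ of a chain with top vertex $w$ has support below $w$. It does, since faces of a chain with top $w$ either keep $w$ or have top vertex below $w$.
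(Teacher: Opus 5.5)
Your proof is correct. It uses the same decomposition as the paper: simplices are grouped by their greatest vertex, so the differential is block triangular with the cone--base complexes of Lemma~\ref{lem:attachment} on the diagonal. The difference is in how you remove the acyclic parts.

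The paper cancels elementary acyclic pairs one at a time. At each step it checks that a new entry from label $w$ to label $u$ factors through the cancelled label $v$, so $u\le v\le w$ and no other diagonal block changes. This is entirely elementary.

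You instead fix label-preserving splitting data $(\iota,\pi,h)$ once and apply the perturbation lemma. This buys closed formulas for the transferred differential $\pi\delta\sum_n(h\delta)^n\iota$ and for the comparison maps. It also reduces the support condition to a single induction, using that $\delta$ strictly lowers the greatest vertex in the poset order, not merely in the linear extension. The cost is citing the perturbation lemma rather than an elementary elimination.

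Your preliminary passage to $\Delta Q$ is unnecessary. The same argument run directly on $K(D)$ produces $\bigoplus_{v\in V}E_*^-(v)$. This already equals $\bigoplus_{v\in S_\F(D)}E_*^-(v)$, because vertices outside $S_\F(D)$ have zero lower local groups. The detour is harmless, but it adds a dependence on Proposition~\ref{prop:preservation} and on the fact that quasi-isomorphisms over a field are homotopy equivalences.

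Also, your phrase ``whose lower links are the original ones up to isomorphism'' should say only that their reduced homology agrees. The links themselves change: in the diamond, the lower link of $a$ shrinks from the path $b-d-c$ to the single point $d$.
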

\begin{proof}
Group simplices according to their greatest vertex. Their boundary
matrix is triangular with respect to the poset labels: a face has
the same greatest vertex or a smaller one. The diagonal block at $v$
is the relative complex of $(v*L^-(v),L^-(v))$.

Over a field, choose bases in each diagonal block separating its
homology representatives from elementary acyclic pairs. A pair
$a,b$ has consecutive degrees, the same vertex label, and a nonzero
coefficient $c$ of $b$ in $da$. It can be cancelled in the full complex.
Replace $b$ by $c^{-1}da$ and subtract suitable multiples of $a$ from
the other generators of its degree to remove their new $b$ coefficient.
The equation $d^2=0$ then splits off this two-term acyclic summand.
This is a chain homotopy equivalence.

Cancellation preserves the order condition. A new matrix entry from
label $w$ to label $u$ can only factor through the cancelled label $v$,
so $u\le v\le w$. In particular no other diagonal block changes.
Cancel all the acyclic pairs. The surviving graded space at $v$ is
$E_*^-(v)$, and the remaining differential strictly lowers its vertex
label. Every vertex outside $S_\F(D)$ has zero local groups, giving
the displayed direct sum.
\end{proof}

The basis choices affect the differential but not the numbers $m_r$.
Writing $d_r:\mathcal C_r\to\mathcal C_{r-1}$ gives
\[
 m_r(D)-\beta_r(D)=\operatorname{rank}d_r+\operatorname{rank}d_{r+1}.
\]
Coefficients may connect vertices separated by several cover arrows.
This differential is therefore not generally the incidence matrix of
the reduced graph viewed as a one-dimensional complex.
Reversing the order gives an ascending Morse complex.

\subsection{First boundary levels and subdivision}
\begin{proposition}\label{prop:firstlevel}
A nonsink reaching exactly one sink has zero descending local groups.
If $v\in B_1$ reaches $k$ sinks of $D$, then
\[
 \mu_1^-(v)=k-1,\qquad \mu_r^-(v)=0\quad(r\ne1).
\]
\end{proposition}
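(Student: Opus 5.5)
We prove both assertions by showing that the relevant lower link decomposes into cones, one for each reachable sink. The basic observation is this: for a vertex $v$ that is not a sink, every maximal chain descending from $v$ ends at a sink of $D$. So every element of $P_{<v}$ lies below, or equals, some sink in $Z_D(v)$; formally, every $u<v$ satisfies $u\reach s$ for some $s\in Z_D(v)$. Sinks are minimal elements of $P$.

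\emph{First assertion.} Suppose $v$ is not a sink and $Z_D(v)=\{s\}$. Then every $u\in P_{<v}$ satisfies $u\ge s$, and $s$ itself lies in $P_{<v}$. So $P_{<v}$ has least element $s$, and $L^-(v)=\Delta(P_{<v})$ is a cone with apex $s$. It is nonempty and contractible, hence acyclic over $\F$ in the augmented sense. All $E_r^-(v)$ therefore vanish.

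\emph{Second assertion.} Take $v\in B_1$. Then $v$ is a sink of $D_B^1=\Gamma_B(D)$ and reaches $k\ge2$ sinks of $D$. The plan is to show that $P_{<v}$ is a disjoint union of $k$ pieces with no comparabilities between them, each piece having a least element.

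First, $v$ has no arrow into $D_B^1$. Since $D_B^1$ is predecessor-closed, as shown in Lemma~\ref{lem:hierarchy}, no vertex of $D_B^1$ lies strictly below $v$. Indeed, a path from $v$ to such a vertex would have its first step landing in $D_B^1$, because that step is a predecessor of a vertex of $D_B^1$. So every $u<v$ reaches exactly one sink, which we call $\sigma(u)\in Z_D(v)$.

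Next, for each $s\in Z_D(v)$ set $P_s=\{u<v:\sigma(u)=s\}$. These sets partition $P_{<v}$. If $u\le u'$ are both below $v$, then $\sigma(u)=\sigma(u')$, since $u'$ reaches every sink that $u$ reaches and each reaches only one. So there are no comparabilities across the pieces, and
\[
L^-(v)=\bigsqcup_{s\in Z_D(v)}\Delta(P_s).
\]
Each $\Delta(P_s)$ has least element $s$, so it is a cone and contractible. Thus $L^-(v)$ is a disjoint union of $k$ contractible complexes. Its only nonzero reduced homology is $\widetilde H_0\cong\F^{k-1}$, and $\widetilde H_{-1}=0$ because the complex is nonempty. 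This gives $\mu_1^-(v)=k-1$ and $\mu_r^-(v)=0$ for $r\ne1$.

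The main obstacle is the claim that every vertex strictly below $v$ reaches exactly one sink. This is the only place where membership in $B_1$, as opposed to merely reaching $k$ sinks, is used. It rests on the predecessor-closedness of $\Gamma_B(D)$ together with the fact that a vertex of $D_B^1$ is a sink there exactly when none of its out-neighbours lies in $D_B^1$. The remaining steps are routine cone and disjoint-union computations.
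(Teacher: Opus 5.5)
Your proof is correct and takes essentially the same route as the paper. In the first case the lower link is a cone with apex the unique sink; in the second, predecessor-closure of $\Gamma_B(D)$ forces each strict descendant of $v$ to reach exactly one sink, which splits $L^-(v)$ into $k$ mutually incomparable cones. One wording slip: in the reachability order an element $u<v$ lies \emph{above} a sink $s\in Z_D(v)$ (that is, $s\le u$), not below it, although your formal statement $u\reach s$ is correct.
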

\begin{proof}
In the first case the unique sink is a least element of the lower
order, so its complex is a cone. In the second case every strict
descendant of $v$ lies outside the first basin boundary: otherwise
predecessor closure would contradict $v\in\Sink(\Gamma_B(D))$.
Each descendant therefore reaches exactly one sink. Partition them
by this sink. There are $k$ nonempty parts, each with a least element,
and elements of different parts are incomparable. The lower link is
a disjoint union of $k$ cones.
\end{proof}

The proposition applies within each boundary stage. The groups
computed there may differ from the local groups in the original order.

\begin{proposition}\label{prop:subdivision}
Adding or removing transitive arrows does not change $\Mh(D)$.
Subdividing an arrow into a directed path through new vertices of
indegree and outdegree one preserves the graph, its levels, and its
local multiplicities, with the natural identification of old vertices.
\end{proposition}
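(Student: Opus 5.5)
The plan is to separate the statement into its two parts and reduce each to facts about the reachability order $P(D)$. Everything in $\Mh(D)$ is defined from $P(D)$ and $\F$: $W(D)$ uses only reachability to sinks and sources of the successive stages, $C^\pm(D)$ use only order links, and $D_U$ is the Hasse diagram of $P(D)|_U$. It is therefore enough to check which data each operation preserves.

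\emph{Transitive arrows.} An arrow $u\to w$ with a directed path $u\to v\to\cdots\to w$ already present does not change reachability, so $P(D)$ is unchanged. Hence the links $L^\pm$, the groups $E^\pm_r$ and $C^\pm(D)$ are unchanged. For the hierarchies, I will check that each stage $D_B^j$ depends only on the order. By the argument in Lemma~\ref{lem:hierarchy}, each stage is predecessor-closed, so its internal reachability is the restriction of $P(D)$. The sinks of an induced subgraph are exactly the minimal elements of the restricted order, because a vertex with an outgoing arrow in a DAG has something strictly below it. Consequently the sets $Z$, the stage vertex sets and the levels $B_j$ are determined by $P(D)$. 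The ascending side is symmetric. Thus $W(D)$, $S_\F(D)$ and $D_{S_\F(D)}$ are all unchanged.

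\emph{Subdivision.} Replace $u\to w$ by $u\to x_1\to\cdots\to x_k\to w$. Old vertices keep their mutual comparabilities, and each $x_i$ lies below exactly $u$ and its ancestors and above exactly $w$ and its descendants. I will carry out the following steps.
\begin{enumerate}
\item \emph{New vertices are regular.} $L^-(x_i)$ has greatest element $x_{i+1}$ (or $w$), so it is a cone. Dually, $L^+(x_i)$ is a cone. So $x_i\notin C^\pm$.
\item \emph{Old local groups are unchanged.} Delete the $x_i$ one at a time using Lemma~\ref{lem:deletion}. Each deletion preserves the reduced homology of every remaining lower and upper link. The cone property of step 1 persists after deleting other $x_j$, since $w$ or $u$ then serves as the cone point. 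The isomorphisms are induced by inclusion, which gives the natural identification.
\item \emph{New vertices do not enter $W$.} First, $x_i$ is never a sink of a stage, since $w$ lies below it and stages are predecessor-closed: if $x_i$ is in a stage, so is $w$. Likewise $x_i$ is never a source of an ascending stage. Moreover, the old vertices of each stage of $D$ and of the subdivided graph coincide, because $Z(v)$ for old $v$ is unchanged (the new vertices are not sinks) and $Z(x_i)=Z(w)$. Induction on $j$, as in Lemma~\ref{lem:hierarchy}, then gives equal levels.
\item \emph{The final graph is the same.} By steps 1--3, $S_\F$ consists of the same old vertices. Since $P$ restricted to old vertices is the original order, the Hasse diagrams coincide.
\end{enumerate}

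The main obstacle is step 3. It must be checked that the new vertices do not alter which old vertices become sinks at later stages. The key point is that $x_i$ and $w$ reach the same sinks in every stage in which $x_i$ appears, so the $x_i$ are carried along with $w$ without ever being extrema. I will verify this carefully by induction.
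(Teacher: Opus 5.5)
Your overall plan is the paper's. The new vertices have conical links and so are regular. Lemma~\ref{lem:deletion} carries the old local groups across by inclusion. The new vertices never become stage extrema because they reach the same sinks as $w$. The Hasse diagram is then taken on the same set with the same order. Handling all $k$ new vertices at once instead of iterating a single subdivision is a harmless variation. Spelling out why the stages depend only on $P(D)$ is also harmless, and more explicit than the paper's one-line treatment of transitive arrows.

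However, the first sentence of step~3 argues in the wrong direction. Predecessor-closure of $D_B^j$ says that if $w$ lies in a stage then so does each $x_i$, since $x_i\reach w$. It does not give ``if $x_i$ is in a stage, so is $w$''. That implication is not a closure property of basin stages in general: in the three-leaf star, $v\in\Gamma_B(D)$ while none of its successors is. Here it holds only because of the outdegree-one chain, and it must come from the induction you mention at the end.

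Suppose $x_1,\ldots,x_k,w$ are either all in $D_B^j$ or all outside it. In the first case, the arrow from $x_i$ to $x_{i+1}$ (or to $w$) lies in the induced stage. So $x_i$ is not a stage sink, and $Z_{D_B^j}(x_i)=Z_{D_B^j}(w)$. Hence either all of them lie in $D_B^{j+1}$ or none does. This is precisely the paper's statement that in every descending stage containing the new vertex, the lower endpoint is present and their memberships at the next step agree.

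Only after this is established may you use ``the new vertices are not sinks''. Combine it with the observation that an old vertex having a new strict descendant in the stage also has $w$ there. Together these show that the old stage sinks and $Z$-sets agree with those of $D$. With that reordering the argument is complete.

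Two smaller points remain. In step~2, the cone point after partial deletion is the next surviving $x_j$ (or $w$), not necessarily $w$ or $u$. In the transitive-arrow part, a sink of an arbitrary induced subgraph need not be minimal in the restricted order. That a stage sink is minimal follows from the predecessor-closure you cite, not from the reason you give.
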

\begin{proof}
Transitive arrows do not change reachability. For a single subdivision
$u\to x\to v$, the strict lower set of $x$ has greatest element $v$,
and its strict upper set has least element $u$. Both order links are
cones. Deleting $x$ recovers the original order, and
Lemma~\ref{lem:deletion} preserves all old local groups.

The vertex $x$ reaches the same sinks as $v$. Inductively, in every
descending stage containing $x$, the vertex $v$ is present and their
membership at the next step agrees. Thus $x$ is never a stage sink.
The dual statement uses $u$ and the ascending stages. All old levels
are unchanged, so $x$ belongs to neither $W$ nor $C^-\cup C^+$.
Iterate for a longer subdivision.
\end{proof}

\subsection{Computing the construction}
Let $n=|V|$, $m=|A|$, and let $R$ be the number of nonempty chains
of $P(D)$.

The boundary hierarchy can be computed by propagating at most two
distinct reachable-sink identifiers per vertex in reverse topological
order. One identifier means a unique reachable sink; two suffice to
detect ambiguity. Repeat on the retained boundary, and reverse arrows
for the hill hierarchy. Each stage costs $O(n_j+m_j)$, and there are
at most $n$ stages in each direction. The total is $O(n(n+m))$ time
with $O(n+m)$ working space when stages are stored by exit labels.

Reachability itself can be computed in $O(n(n+m))$ time by a search
from each vertex. Once the order complex is listed, dense boundary-rank
computations give a coarse bound of $O(nR^3)$ field operations for all
lower and upper local groups, treating them one at a time, with
$O(R^2+n^2)$ storage. The cancellation procedure in
Proposition~\ref{prop:chain} takes at most $R/2$ steps, each with
$O(R^2)$ matrix updates, hence $O(R^3)$ field operations.

After $S_\F(D)$ is known, the Hasse diagram on it can be obtained in
$O(|S_\F(D)|(n+m))$ time. For each retained $u$, scan its retained
descendants with larger elements first. Select an uncovered $v$ as a
cover of $u$, then mark $v$ and all its descendants. Every noncover
is marked by a preceding cover, and a cover cannot be marked by a
different retained element without an intermediate vertex. Sharing
the marking set among these searches gives $O(n+m)$ work per $u$.

These bounds describe direct implementations. Since a total order
has $R=2^n-1$, enumerating the order complex can already require
exponential time. Recognizing conical links and using sparse matrices
or cancellations can reduce the computation. Over $\mathbb Q$, the
bounds above count field operations rather than bit operations.

\section{Examples}\label{sec:examples}
We compute the descending Morse numbers; reversing arrows gives the
ascending ones. In each example the identity
$m_D(t)-P_D(t)=(1+t)Q_D(t)$, with $Q_D(t)$ having nonnegative
coefficients, verifies all the Morse inequalities.

\subsection{A three-leaf star}
Let $D$ have arrows $v\to a,v\to b,v\to c$; see
Figure~\ref{fig:starcycle}(a). The sinks $a,b,c$ each contribute
$\mu_0^-=1$. The lower link of $v$ is three isolated points, so
$\mu_1^-(v)=2$. All vertices are retained:
\[
 (m_0,m_1)=(3,2),\qquad(\beta_0,\beta_1)=(1,0).
\]
The inequalities read
\[
 3\ge1,\qquad 2-3=-1=0-1.
\]
Equivalently, $m_D(t)-P_D(t)=2+2t=2(1+t)$.

Take
$\mathcal C_0=\F\langle a,b,c\rangle$ and two generators $e_1,e_2$
at $v$ in degree one, with
\[
 de_1=b-a,\qquad de_2=c-a.
\]
The boundary has rank two. Adding $v$ joins three components into one,
and the multiplicity two counts the two independent mergers.

\begin{figure}[htbp]
\centering
\begin{tikzpicture}[>=Stealth,scale=1.08,every node/.style={font=\normalsize}]
\begin{scope}[xshift=0cm]
\node (v) at (0,1.5) {$v$};
\node (a) at (-1.1,0) {$a$};
\node (b) at (0,0) {$b$};
\node (c) at (1.1,0) {$c$};
\foreach \u in {a,b,c}\draw[->] (v)--(\u);
\node at (0,-.6) {(a) Three-leaf star};
\end{scope}
\begin{scope}[xshift=5.5cm]
\node (u) at (-1,1.5) {$u$};\node (w) at (1,1.5) {$v$};
\node (aa) at (-1,0) {$a$};\node (bb) at (1,0) {$b$};
\foreach \s/\t in {u/aa,u/bb,w/aa,w/bb}\draw[->] (\s)--(\t);
\node at (0,-.6) {(b) Bipartite four-cycle};
\end{scope}
\end{tikzpicture}
\caption{Two different roles for degree-one generators: component
mergers in (a), and a merger together with a surviving cycle in (b).
Crossings of arrows are not vertices.}
\label{fig:starcycle}
\end{figure}

\subsection{Bipartite graphs and dimension one}\label{sec:dimone}
A nonempty graph has $d(D)\le1$ precisely when no vertex has
both an incoming and an outgoing arrow. Let $A$ be the set of
nonsinks and $B$ the set of sinks, including isolated vertices.
Every arrow then goes from $A$ to $B$. Put
$p=|A|$, $q=|B|$, let $e$ be the number of arrows, and let $c$ count weakly
connected components.

\begin{proposition}\label{prop:heightone}
For $d(D)\le1$,
\[
 m_D(t)=q+(e-p)t,\qquad
 P_D(t)=c+(e-p-q+c)t,\qquad Q_D(t)=q-c.
\]
In particular, $m_D=P_D$ exactly when every component has one sink.
\end{proposition}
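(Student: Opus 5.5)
The plan is to compute the local multiplicities vertex by vertex, read off $m_D(t)$, and then obtain $P_D(t)$ from the fact that $K(D)$ is the one-dimensional underlying graph. Since $d(D)\le1$, every chain of $P(D)$ has at most two elements. The chains are therefore the vertices together with the pairs joined by an arrow, so $K(D)$ is the undirected underlying graph of $D$.

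First the local groups. A sink $b\in B$, including an isolated vertex, has empty lower link, so $\mu_0^-(b)=1$ and all other $\mu_r^-(b)=0$. A nonsink $a\in A$ has strict lower set equal to its out-neighbours. These are all sinks and pairwise incomparable, so $L^-(a)$ is a discrete set of $\deg^+(a)\ge1$ points. Hence $\mu_1^-(a)=\deg^+(a)-1$ and all its other multiplicities vanish. Summing gives $m_0=q$ and $m_1=\sum_{a\in A}(\deg^+(a)-1)=e-p$, because every arrow leaves exactly one vertex of $A$.

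Next the Betti numbers. The graph $K(D)$ has $p+q$ vertices, $e$ edges and $c$ connected components; weak components of $D$ are exactly the components of the underlying graph. Therefore $\beta_0=c$, $\beta_r=0$ for $r\ge2$, and Euler characteristic gives $\beta_1=e-(p+q)+c$. Then
\[
 m_D(t)-P_D(t)=(q-c)+(q-c)t=(1+t)(q-c),
\]
so $Q_D=q-c$. The last claim follows because every component contains at least one sink (the graph is finite and acyclic). Thus $q\ge c$, with equality exactly when each component has a single sink.

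The only point needing care is the identification of $K(D)$ with the underlying graph. This requires that reachability in $D$ adds no comparabilities beyond arrows, which holds because paths have length at most one. Isolated vertices must also be counted consistently: in $B$, as a component, and with $\mu_0^-=1$. I expect no real obstacle beyond this bookkeeping.
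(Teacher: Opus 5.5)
Your proposal is correct and follows essentially the same route as the paper: it computes the lower links (empty for sinks, discrete out-neighbourhoods for nonsinks), identifies $K(D)$ with the underlying bipartite graph to get $P_D$, and subtracts. You also spell out details the paper leaves implicit: the Euler-characteristic count for $\beta_1$, and the fact that every weak component of a finite DAG contains a sink, which is what justifies the ``in particular'' clause.
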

\begin{proof}
The lower link of $a\in A$ is the discrete set of its outgoing
neighbors, so its only contribution is $\deg^+(a)-1$ in degree one.
Each sink contributes one in degree zero. The order complex is the
underlying bipartite graph, whose first Betti number is $e-p-q+c$.
Subtracting gives $(1+t)(q-c)$.
\end{proof}

Thus a forest need not give equality: the three-leaf star has
$Q_D=2$. On $n\ge2$ vertices with $d(D)=1$,
\[
 m_0\le n-1,\qquad
 m_1\le\left\lfloor\frac{(n-1)^2}{4}\right\rfloor .
\]
The first bound is attained by a star. For the second,
$e-p\le p(q-1)=p(n-p-1)$, and a complete bipartite graph with $p$
and $q-1$ as equal as possible attains the bound.

Without fixing $n$, the possible Morse--Betti pairs admit a short
description. They are exactly
\[
 \bigl(q+(q-c+b)t,\ c+bt\bigr),\qquad
 q\ge c\ge1,\quad b\ge0,\quad q=c\ \Longrightarrow\ b=0.
\]
Necessity follows from the proposition: one sink per component
forces a forest. For sufficiency when $q>c$, take a source joined to
$q-c+1$ sinks, add $b$ sources each joined to the same two of these
sinks, and add $c-1$ isolated sinks. If $q=c$ and $b=0$, a single
arrow and $c-1$ isolated vertices suffice.

For the complete bipartite graph with all arrows from $p$ sources
to $q$ sinks,
\[
 m_D(t)=q+p(q-1)t,\qquad
 P_D(t)=1+(p-1)(q-1)t,\qquad Q_D(t)=q-1.
\]
Figure~\ref{fig:starcycle}(b) shows $K_{2,2}$.
Here $(m_0,m_1)=(2,2)$ and $(\beta_0,\beta_1)=(1,1)$.
A Morse complex has generators $e_u,e_v$ in degree one with
$de_u=de_v=b-a$. The difference $e_u-e_v$ represents the surviving
cycle. For $K_{3,4}$ the corresponding numbers are $(4,9)$ and
$(1,6)$. There are only three vertices in $B_1$, so unweighted
level counts fail even the weak degree-one inequality.

\subsection{The diamond}
Let the arrows be $a\to b,a\to c,b\to d,c\to d$.
There is one source and one sink. The first basin and hill boundaries
are empty, and $W=\{a,d\}$. Every nonsink has a conical lower link
with least element $d$; every nonsource has a conical upper link
with greatest element $a$. Thus $S_\F(D)=\{a,d\}$ as well.

\begin{figure}[htbp]
\centering
\begin{tikzpicture}[>=Stealth,scale=1.08,every node/.style={font=\normalsize}]
\node (a) at (0,2) {$a$};\node (b) at (-1,1) {$b$};
\node (c) at (1,1) {$c$};\node (d) at (0,0) {$d$};
\foreach \s/\t in {a/b,a/c,b/d,c/d}\draw[->] (\s)--(\t);
\draw[->,thick] (2,1)--(3.6,1);
\node at (2.8,1.4) {$\Mh$};
\node (aa) at (4.8,2) {$a$};\node (dd) at (4.8,0) {$d$};
\draw[->] (aa)--(dd);
\end{tikzpicture}
\caption{The diamond reduces to an arrow. Both reachability order
complexes are contractible.}
\label{fig:diamond}
\end{figure}

The full order complex consists of the two triangles $abd$ and $acd$
joined along their common edge $ad$. This edge is present in the order
complex because $a$ reaches $d$. The complex is a disc, although the
underlying one-dimensional graph is a circle.
Only $d$ contributes to the descending Morse numbers:
\[
 m_D(t)=P_D(t)=1,\qquad Q_D(t)=0.
\]
All Morse inequalities are equalities.

\subsection{A six-vertex graph with second homology}
Arrange six vertices in three pairs
$\{a_1,a_2\}$, $\{b_1,b_2\}$, $\{c_1,c_2\}$.
Include all four arrows from the $a$-pair to the $b$-pair and all
four from the $b$-pair to the $c$-pair; see Figure~\ref{fig:sphere}.
The descending levels are the $c$-, $b$-, and $a$-pairs, respectively,
and every vertex is retained.

\begin{figure}[htbp]
\centering
\begin{tikzpicture}[>=Stealth,scale=1.08,every node/.style={font=\normalsize}]
\node (a1) at (-1.1,3) {$a_1$};\node (a2) at (1.1,3) {$a_2$};
\node (b1) at (-1.1,1.5) {$b_1$};\node (b2) at (1.1,1.5) {$b_2$};
\node (c1) at (-1.1,0) {$c_1$};\node (c2) at (1.1,0) {$c_2$};
\foreach \s/\t in {a1/b1,a1/b2,a2/b1,a2/b2,b1/c1,b1/c2,b2/c1,b2/c2}
 \draw[->] (\s)--(\t);
\node[anchor=west] at (2,3) {$\mu_2^-(a_i)=1$};
\node[anchor=west] at (2,1.5) {$\mu_1^-(b_i)=1$};
\node[anchor=west] at (2,0) {$\mu_0^-(c_i)=1$};
\end{tikzpicture}
\caption{A DAG whose order complex is the boundary of an octahedron.
The annotations give the single nonzero descending multiplicity at
each vertex.}
\label{fig:sphere}
\end{figure}

Each $c_i$ is a sink. The lower link of each $b_i$ is the two-point
set $\{c_1,c_2\}$. The lower link of each $a_i$ is the four-cycle
on the $b$- and $c$-pairs. Consequently
\[
 (m_0,m_1,m_2)=(2,2,2).
\]
A chain contains at most one vertex from each pair, and any such
selection is a chain. Therefore
\[
 K(D)=S^0*S^0*S^0\cong S^2,\qquad
 (\beta_0,\beta_1,\beta_2)=(1,0,1).
\]
The strong inequalities are
\[
 2\ge1,\qquad 2-2=0\ge-1,\qquad 2-2+2=2=1-0+1.
\]
The polynomial calculation is
\[
 (2+2t+2t^2)-(1+t^2)=(1+t)^2,
 \qquad Q_D(t)=1+t.
\]

Choose generators $z_1,z_2$ in degree two, $e_1,e_2$ in degree one,
and $c_1,c_2$ in degree zero. One choice of differential is
\[
 dz_1=dz_2=e_2-e_1,\qquad
 de_1=de_2=c_2-c_1.
\]
Both maps have rank one and their composition is zero.
Thus $H_0\cong\F$, $H_1=0$, and
$H_2=\F\langle z_1-z_2\rangle$, giving the stated Betti numbers.

\subsection{Lower links in dimension two}\label{sec:dimtwo}
Suppose $d(D)\le2$. For each nonsink $v$, the lower link $G_v=L^-(v)$
is a graph, possibly with isolated vertices. Write $n_v,e_v,c_v$
for its numbers of vertices, edges, and components.

\begin{proposition}\label{prop:heighttwo}
For $d(D)\le2$,
\[
 m_0=|\Sink(D)|,\qquad
 m_1=\sum_{v\notin\Sink(D)}(c_v-1),\qquad
 m_2=\sum_{v\notin\Sink(D)}(e_v-n_v+c_v),
\]
and $m_r=0$ for $r>2$.
\end{proposition}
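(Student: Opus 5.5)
The plan is to compute the local groups $E_r^-(v)=\widetilde H_{r-1}(L^-(v);\F)$ vertex by vertex from the definition, and then sum over $v$ as in \eqref{eq:counts}. The hypothesis $d(D)\le2$ enters through the dimension of the lower links. A chain in $P_{<v}$ together with $v$ is a chain in $P$, and $K(D)$ has dimension $d(D)\le 2$. So every simplex of $L^-(v)$ has at most two vertices, and $G_v=L^-(v)$ is a graph, as asserted before the statement. Hence $\widetilde H_k(G_v)=0$ for $k\ge2$, and $\mu_r^-(v)=0$ for $r>2$. This gives the last claim.

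Sinks are handled separately. For a sink $v$ the set $P_{<v}$ is empty, so $L^-(v)=\varnothing$. With the augmented convention $\widetilde H_{-1}(\varnothing)=\F$ and no other reduced homology, a sink contributes exactly $\mu_0^-(v)=1$ and nothing in higher degrees. For a nonsink $v$ the lower link is nonempty, since $v$ has an outgoing arrow. Therefore $\widetilde H_{-1}(G_v)=0$, and nonsinks contribute nothing to $m_0$. This gives $m_0=|\Sink(D)|$.

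For a nonempty graph $G_v$ I would use the standard homology of a one-dimensional complex over a field. First, $\dim\widetilde H_0=c_v-1$. Second, $\dim H_1=e_v-n_v+c_v$, which follows from the Euler characteristic $n_v-e_v=\dim H_0-\dim H_1$. This yields $\mu_1^-(v)=c_v-1$ and $\mu_2^-(v)=e_v-n_v+c_v$. Summing over nonsinks, with sinks contributing zero in degrees one and two, gives the displayed formulas for $m_1$ and $m_2$.

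There is no real obstacle here. The only points needing care are the dimension bound on the links and the augmented convention for empty links, which separates sinks from nonsinks.
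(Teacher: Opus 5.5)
Your proof is correct and follows the paper's argument. The dimension bound makes each lower link a graph, a sink contributes only through $\widetilde H_{-1}(\varnothing)=\F$, and a nonempty graph has reduced Betti numbers $c_v-1$ and $e_v-n_v+c_v$; you just make explicit the chain-length bound and the sink/nonsink split that the paper leaves implicit.
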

\begin{proof}
A nonempty lower link has dimension at most one, so its only
possible reduced homology groups have dimensions $c_v-1$ and
$e_v-n_v+c_v$. The definitions give the formulas.
\end{proof}

The graphs $G_v$ are bipartite. A contribution in degree two means
that $G_v$ contains a cycle; that cycle need not have length four.
For example, orient a six-cycle from one bipartition class to the
other and place a new vertex $v$ above all its vertices. Then
$L^-(v)$ is that six-cycle, so $\mu_2^-(v)=1$, although the link
contains no $K_{2,2}$.

A useful extension of the examples is to add several common maxima.
Let $G$ be any nonempty graph of dimension at most one in the sense
of Proposition~\ref{prop:heightone}, and let $C_a(G)$ be the Hasse
diagram of the poset obtained by placing $a\ge1$ new incomparable
vertices above every vertex of $P(G)$. Let $p,q,e,c$ have the
meaning used there, and put $b=e-p-q+c=\beta_1(G)$.

\begin{proposition}\label{prop:commonmaxima}
The order complex of $C_a(G)$ is $X_a*K(G)$, where $X_a$ is a
discrete set of $a$ vertices. Consequently
\[
 \begin{split}
 m_{C_a(G)}(t)&=q+\bigl(e-p+a(c-1)\bigr)t+ab\,t^2,\\
 P_{C_a(G)}(t)&=1+(a-1)(c-1)t+(a-1)b\,t^2,\\
 Q_{C_a(G)}(t)&=(q-1)+bt .
 \end{split}
\]
\end{proposition}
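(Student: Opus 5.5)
The first claim is structural. Every new vertex $x_i$ is above every vertex of $P(G)$, and the $x_i$ are pairwise incomparable. A chain of the new poset therefore contains at most one $x_i$, together with an arbitrary chain of $P(G)$, and every such union is a chain. This gives $K(C_a(G))=X_a*K(G)$. Here we use that $P(C_a(G))$ is the poset obtained by the construction, since a Hasse diagram realizes its order, as in the proof of Lemma~\ref{lem:hierarchy}.

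For the Morse polynomial, split the vertices into old and new ones. An old vertex $v$ has the same strict lower set as in $G$, so its local groups are those computed in Proposition~\ref{prop:heightone}. These contribute $q+(e-p)t$. Each new vertex $x_i$ has lower link exactly $K(G)$. Its contribution in degree $r$ is therefore $\dim\widetilde H_{r-1}(K(G))$, which is $c-1$ for $r=1$ and $b$ for $r=2$, and nothing else because $K(G)$ is nonempty of dimension at most one. Summing over the $a$ new vertices gives $a(c-1)t+ab\,t^2$. Adding the two parts yields the stated $m_{C_a(G)}$.

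For the Poincaré polynomial, apply the augmented join formula from the proof of Lemma~\ref{lem:deletion}:
\[
\widetilde H_k(X_a*K(G))\cong\bigoplus_{i+j=k-1}\widetilde H_i(X_a)\otimes\widetilde H_j(K(G)).
\]
The factor $X_a$ is nonempty, so only $i=0$ contributes, with dimension $a-1$. This gives $\widetilde\beta_1=(a-1)(c-1)$ and $\widetilde\beta_2=(a-1)b$. The complex is connected, since any vertex of $K(G)$ is adjacent to every $x_i$, so $\beta_0=1$. These values match the stated $P_{C_a(G)}$.

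Finally, subtract:
\[
m-P=(q-1)+\bigl(e-p+a(c-1)-(a-1)(c-1)\bigr)t+b\,t^2 .
\]
Using $e-p=b+q-c$, the linear coefficient simplifies to $b+q-1$. The difference is then $(q-1)+(q-1+b)t+bt^2$, which equals $(1+t)\bigl((q-1)+bt\bigr)$. The only step needing care is confirming that old vertices keep their local data and that the Hasse diagram realizes the intended order. Both are immediate, because adding elements above everything changes no lower set of an old vertex.
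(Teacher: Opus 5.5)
Your proof is correct and follows the same route as the paper. It obtains the join description from the chain structure, uses $K(G)$ as the lower link of each new maximum (with old vertices keeping their data from Proposition~\ref{prop:heightone}), applies the join formula for the Betti numbers, and subtracts to get $Q$; you also spell out steps the paper leaves implicit, such as $\beta_0=1$ and the identity $e-p=b+q-c$ in the final factorization.
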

\begin{proof}
A chain consists of a chain in $P(G)$ and at most one new maximum,
which proves the join description. Every new maximum has lower
link $K(G)$ and contributes $c-1$ in degree one and $b$ in degree
two. The join formula gives the stated Betti numbers; subtraction
gives $Q$.
\end{proof}

This also explains first homology in three-level examples:
disconnected lower graphs can produce $\beta_1>0$ when $a>1$.
The condition is determined here by $c-1$, rather than by the
presence or absence of a particular transitive arrow.

\subsection{Complete layered graphs}\label{sec:layers}
Let $L_1,\ldots,L_k$ be nonempty disjoint layers, ordered from top to
bottom, with $|L_i|=a_i$ and $k\ge2$.
Let $D(a_1,\ldots,a_k)$ have every arrow from $L_i$ to $L_{i+1}$
and no others. Its transitive closure also has all arrows from
$L_i$ to $L_j$ for $i<j$. Both graphs have the same reachability
order. In this subsection $D$ denotes either graph.

\begin{proposition}\label{prop:layers}
The order complex and its Poincar\'e polynomial are
\[
 K(D)=X_{a_1}*\cdots*X_{a_k},\qquad
 P_D(t)=1+\left(\prod_{i=1}^k(a_i-1)\right)t^{k-1}.
\]
For $0\le r\le k-1$,
\[
 m_r(D)=a_{k-r}\prod_{j=k-r+1}^k(a_j-1),
 \qquad
 Q_D(t)=\sum_{r=0}^{k-2}
       \left(\prod_{j=k-r}^k(a_j-1)\right)t^r,
\]
where an empty product is one.
\end{proposition}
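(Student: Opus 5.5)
The plan is to first identify the reachability order and then compute everything from join formulas. In either graph, $v<u$ holds exactly when $u\in L_i$ and $v\in L_j$ with $i<j$. So two vertices are comparable if and only if they lie in different layers. A chain is therefore a choice of at most one vertex from each layer, and every such choice is a chain. This gives $K(D)=X_{a_1}*\cdots*X_{a_k}$, where $X_a$ is discrete with $a$ points. We have $\widetilde H_*(X_a)$ concentrated in degree $0$ with dimension $a-1$. Applying the augmented join formula from the proof of Lemma~\ref{lem:deletion} $k-1$ times, reduced homology is concentrated in degree $(k-1)$ with dimension $\prod_i(a_i-1)$. Converting to ordinary homology adds $1$ in degree zero. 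When $k-1\ge1$ this yields the stated $P_D$; the case where some $a_i=1$ gives a cone and is consistent with the formula, since the product vanishes.

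Next I will compute the local groups. For $v\in L_i$, the strict lower set is $L_{i+1}\cup\cdots\cup L_k$ with the induced order, so $L^-(v)=X_{a_{i+1}}*\cdots*X_{a_k}$. When $i=k$ this is empty. By the same join computation, $\widetilde H$ of this link is concentrated in degree $k-i-1$ with dimension $\prod_{j>i}(a_j-1)$; for $i=k$ it is the $\widetilde H_{-1}(\varnothing)=\F$ convention. Hence $\mu^-_r(v)$ is nonzero only for $r=k-i$, where it equals $\prod_{j=i+1}^k(a_j-1)$. Summing over the $a_i$ vertices of layer $L_i$, with $i=k-r$, gives $m_r=a_{k-r}\prod_{j=k-r+1}^k(a_j-1)$ for $0\le r\le k-1$.

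Finally, I will verify $Q_D$ by direct algebra. Write $b_i=a_i-1$ and $\pi_r=\prod_{j=k-r}^{k}b_j$, so that $\pi_r=b_{k-r}\pi_{r-1}$ with $\pi_{-1}=1$. Then $m_r=(b_{k-r}+1)\pi_{r-1}=\pi_r+\pi_{r-1}$. It follows that $\sum_r m_rt^r=\sum_{r=0}^{k-1}(\pi_r+\pi_{r-1})t^r$, which equals $1+\pi_{k-1}t^{k-1}+(1+t)\sum_{r=0}^{k-2}\pi_rt^r$ by telescoping. The leftover term $1+\pi_{k-1}t^{k-1}$ is exactly $P_D(t)$, which proves the claimed $Q_D$.

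The only real obstacle is bookkeeping. This includes the degree shift and the empty-link convention at the bottom layer, and the augmented join indexing (a join of two $0$-dimensional factors contributes in degree $1$). Getting the telescoping identity right, including the edge terms $r=0$ and $r=k-1$, also needs care. Alternatively, nonnegativity could be cross-checked against Corollary~\ref{cor:polynomial}, but the direct computation suffices.
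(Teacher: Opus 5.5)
Your proposal is correct and follows the paper's proof essentially step for step. Both arguments use the one-vertex-per-layer join description, the join formula for $P_D$ and for the lower links $X_{a_{i+1}}*\cdots*X_{a_k}$, and the splitting $a_{k-r}=(a_{k-r}-1)+1$; you make that splitting explicit as $m_r=\pi_r+\pi_{r-1}$ and telescope to read off $Q_D$, handling the empty bottom-layer link and the degree shift correctly.
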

\begin{proof}
A chain selects at most one vertex from each layer, proving the
join description. Each discrete factor has reduced homology only
in degree zero, of dimension $a_i-1$. The join formula gives $P_D$.
For $v\in L_i$, the lower link is
$X_{a_{i+1}}*\cdots*X_{a_k}$; its local homology is concentrated in
degree $k-i$, with multiplicity $\prod_{j>i}(a_j-1)$.
Summing over a layer gives $m_r$. Finally write
$a_{k-r}=(a_{k-r}-1)+1$; the two resulting products are precisely
the adjacent coefficients of $(1+t)Q_D$.
\end{proof}

For three layers this becomes
\[
 \begin{split}
 m_D(t)&=c+b(c-1)t+a(b-1)(c-1)t^2,\\
 P_D(t)&=1+(a-1)(b-1)(c-1)t^2,\\
 Q_D(t)&=(c-1)+(b-1)(c-1)t.
 \end{split}
\]
The six-vertex example is $D(2,2,2)$.
More generally, $K(D(2,\ldots,2))$ is the boundary of a
$k$-dimensional cross-polytope, hence a $(k-1)$-sphere, whereas
$m_D(t)=2(1+t+\cdots+t^{k-1})$.
For $D(3,\ldots,3)$, one has $m_r=3\cdot2^r$.
Thus local multiplicities can grow rapidly with dimension.

Reversing the layer order gives
\[
 m_r^+(D)=a_{r+1}\prod_{j=1}^r(a_j-1).
\]
A palindromic layer sequence therefore has equal ascending and
descending Morse numbers. The converse need not hold when singleton
layers make these products vanish: both Morse polynomials equal
$1$ for $D(1,2,3,1)$, although its layer sequence is not palindromic.
All descending inequalities are equalities exactly when $a_k=1$;
otherwise $Q_D\ne0$. This does not mean that every individual
inequality is strict.

\subsection{Local degrees, depth, and coefficient fields}\label{sec:localtopology}
Always $\mu_r^-(v)=0$ for $r>s(v)$, but there is
no general vanishing theorem for the intermediate degrees.

For a small example, take the six vertices
$v,z,b_1,b_2,c_1,c_2$ and arrows
\[
 v\to z,\quad v\to b_2,\quad z\to b_1,\quad
 b_i\to c_j\quad(1\le i,j\le2).
\]
Then $s(v)=3$, but $\mu_2^-(v)=1$. Indeed, the lower link of $v$
consists of the four-cycle on $b_1,b_2,c_1,c_2$ together with the
cone from $z$ on the tree $c_1-b_1-c_2$.
Collapsing that added cone onto its base preserves the four-cycle's
homotopy type. Thus $L^-(v)\simeq S^1$.
Adding one further sink reached directly from $v$ makes its lower
link $S^1\sqcup\{\text{point}\}$ up to homotopy; the same vertex
then contributes in both degrees one and two.

More generally, every finite simplicial homotopy type can occur
as a lower link. Given a nonempty finite simplicial complex $L$,
take the inclusion poset of its nonempty faces and adjoin a greatest
element $v$. The lower link of $v$ is the barycentric subdivision
of $L$, so
\[
 E_r^-(v)\cong\widetilde H_{r-1}(L;\F).
\]
This observation also settles the question of possible torsion:
the face poset of a triangulated real projective plane is a
dimension-two DAG with $H_1(K(D);\mathbb Z)=\mathbb Z/2$.
Its Poincar\'e polynomial is
\[
 P_D(t;\F)=
 \begin{cases}
  1+t+t^2,&\operatorname{char}\F=2,\\
  1,&\operatorname{char}\F\ne2.
 \end{cases}
\]
Here the coefficient field is displayed explicitly to distinguish
the two computations. For the additional greatest vertex,
$E_2^-(v)$ and $E_3^-(v)$ are each one-dimensional in
characteristic two, whereas all its local groups vanish in other
characteristics. Thus $\mathbb F_2$ gives a different answer from
$\mathbb Q$, $\mathbb R$, or $\mathbb F_3$.
These computations follow from the
cellular boundary map given by multiplication by two
\cite[Section~2.2]{Hatcher}. Thus the coefficient field in our
definitions is essential. By contrast, the tree, bipartite, and
complete layered examples above have the same formulas over every
field. No change to the Morse inequalities is needed: one simply
computes both sides over the chosen field.

For the tree $T$ of Section~\ref{sec:tree-example},
$m_T(t)=3+2t$, $P_T(t)=1$, and $Q_T(t)=2$.
A Morse differential can be written
$du_q=f-e$, $du_x=d-e$; its rank is two.
The generator at $x$ is precisely the one missed by the original
Maxwell graph.

\section{Maxwell and path homology: comparisons and questions}\label{sec:outlook}
\subsection{An obstruction to unweighted level counts}
Write $c_j(D)=|B_j(D)|$. The following obstruction applies to any
homology theory with the usual zeroth Betti number.

\begin{proposition}\label{prop:obstruction}
There is no assignment of nonnegative Betti numbers $b_r(D)$, with
$b_0(D)$ equal to the number of weakly connected components, for which
the strong Morse inequalities with $c_j(D)=|B_j(D)|$ hold for every DAG.
\end{proposition}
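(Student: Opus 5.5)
The plan is to exhibit a single DAG for which the strong Morse inequality in degree one already forces a negative first Betti number. No choice of $b_1(D)\ge0$ can then satisfy it, whatever homology theory produced the numbers. The only properties of the assignment we use are $b_0(D)=$ the number of weakly connected components and $b_1(D)\ge0$.

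Take the three-leaf star $D$ of Figure~\ref{fig:starcycle}(a), with arrows $v\to a,\ v\to b,\ v\to c$, and compute its hierarchy.
\begin{itemize}
\item The sinks are $a,b,c$, so $c_0(D)=|B_0|=3$.
\item The vertex $v$ reaches three sinks, so $D_B^1=\Gamma_B(D)=D[\{v\}]$. This single vertex is a sink of its stage, giving $B_1=\{v\}$ and $c_1(D)=1$.
\item The next stage is empty, so $c_j(D)=0$ for $j\ge2$.
\end{itemize}
The graph is weakly connected, so $b_0(D)=1$.

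The strong inequality \eqref{eq:strong} with $q=1$ and $m_r$ replaced by $c_r$ reads
\[
 c_1(D)-c_0(D)\ \ge\ b_1(D)-b_0(D),
\]
that is, $1-3=-2\ge b_1(D)-1$. This gives $b_1(D)\le-1$, which contradicts nonnegativity. Hence no assignment works for every DAG.

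The degree-zero inequality $3\ge1$ holds, so the failure genuinely occurs in degree one. This matches the discussion of the star, where the multiplicity $\mu_1^-(v)=2$ is needed but the level count records only $1$.

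There is no real obstacle here. The only point to check carefully is that $v$ counts as a sink of the stage $\Gamma_B(D)$, even though it is a source of $D$. This holds because sinks of an induced subgraph are tested using only that subgraph's own arrows, and the stage $D[\{v\}]$ has no arrows.
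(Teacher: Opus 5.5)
Your proof is correct and is essentially the paper's argument. The paper takes the star with $k\ge3$ leaves and derives $b_1\le 2-k<0$ from the degree-one strong inequality; your three-leaf star is the case $k=3$, which gives $b_1\le -1$.
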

\begin{proof}
Take the star with arrows $v\to s_i$, $1\le i\le k$, where $k\ge3$.
Its only nonempty descending levels have sizes $c_0=k$ and $c_1=1$.
It is connected, so the degree-one strong inequality would give
\[
 1-k=c_1-c_0\ge b_1-b_0=b_1-1,
 \qquad\text{hence}\qquad b_1\le2-k<0.
\]
This contradicts nonnegativity.
\end{proof}

The star is its own Maxwell graph. Passing to cohomology over a field
leaves its Betti numbers unchanged and does not remove the obstruction.

\subsection{Homology on the unchanged Maxwell graph}
Put $W=W(D)$, $P_W=P(D)|_W$, and define
\begin{equation}\label{eq:Maxwellhomology}
 H_r^{\M}(D;\F)=H_r(\Delta P_W;\F),\qquad
 b_r^{\M}(D)=\dim H_r^{\M}(D;\F).
\end{equation}
Define the local multiplicities in the reachability order of $\M(D)$ by
\[
 \mu_r^{\M}(w)=\dim\widetilde H_{r-1}
       \bigl(\Delta\{u\in W:u<w\};\F\bigr),\qquad
 m_r^{\M}(D)=\sum_{w\in W}\mu_r^{\M}(w).
\]

\begin{proposition}\label{prop:Maxwellhomology}
The numbers $m_r^{\M}(D)$ and $b_r^{\M}(D)$ satisfy the weak and strong
Morse inequalities, including eventual equality. The groups
$H_*^{\M}$ are invariant under the original Maxwell reduction,
under changes of transitive arrows, and under directed subdivision.
The original boundary hierarchies are retained.
\end{proposition}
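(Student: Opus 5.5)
The plan is to reduce every assertion to results already proved for an arbitrary finite DAG, applied to the graph $\M(D)=D_W$ itself. By construction, $D_W$ is the Hasse diagram of $P_W=P(D)|_W$, and a Hasse diagram realizes exactly its order: every comparable pair is joined by a chain of covers, as noted in the proof of Lemma~\ref{lem:hierarchy}. Hence $P(\M(D))=P_W$ and $K(\M(D))=\Delta P_W$. The lower order link of $w$ in $P(\M(D))$ is $\Delta\{u\in W:u<w\}$. Therefore $\mu_r^{\M}(w)=\mu_r^-(w)$ computed in $\M(D)$, so $m_r^{\M}(D)=m_r(\M(D))$ and $b_r^{\M}(D)=\beta_r(\M(D))$. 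The weak and strong inequalities, including eventual equality, then follow at once from Theorem~\ref{thm:main} applied to the DAG $\M(D)$.

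For the retained hierarchies, apply Lemma~\ref{lem:hierarchy} with $U=W$. It gives $B_j(D_W)=B_j(D)$ and $H_j(D_W)=H_j(D)$ for all $j$. In particular $W(\M(D))=\bigcup_j B_j\cup\bigcup_j H_j=W(D)$. This has two consequences. First, $\M(\M(D))$ is the Hasse diagram of $P_W$ restricted to $W$, which is $\M(D)$ again. So $H_*^{\M}$ is invariant under the original Maxwell reduction: $H_*^{\M}(\M(D))=H_*(\Delta P_W)=H_*^{\M}(D)$. Second, the hierarchies of $\M(D)$ are the original ones, which is the last assertion.

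Invariance under adding or removing transitive arrows is immediate. The sets $Z_D(v)$, $T_D(v)$, sinks and sources depend only on reachability, and reachability is unchanged. Inductively, every stage $D_B^j$ and $D_H^j$ therefore has the same vertex set and the same induced reachability, using predecessor closure as in Lemma~\ref{lem:hierarchy}. So $W$ and $P_W$ are unchanged.

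For directed subdivision, I would reuse the second paragraph of the proof of Proposition~\ref{prop:subdivision}. A new vertex $x$ on $u\to x\to v$ is never a stage sink or source, and all old levels are unchanged, so $W$ is the same set of old vertices. It remains to check that the restricted order on $W$ is the same. Subdivision preserves reachability among old vertices: a path through $x$ corresponds to the old arrow $u\to v$. Iterating over longer subdivisions then gives the same $P_W$, hence the same $H_*^{\M}$.

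The main obstacle is the subdivision step. One must verify carefully that in every descending stage containing $x$, the vertex $v$ is also present, and that $x$ and $v$ reach the same stage sinks. The argument must also handle the degenerate case where $v$ is itself a stage sink, so that $x$ reaches exactly one sink there. I expect the inductive argument cited in Proposition~\ref{prop:subdivision} to cover this, since $x$ has the single out-neighbour $v$. I would check that its proof did not use the local links, which is plausible because it is purely about reachability.
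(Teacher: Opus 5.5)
Your proposal is correct and follows the paper's proof. Like the paper, you apply Theorem~\ref{thm:main} to $\M(D)$, use Lemma~\ref{lem:hierarchy} with $U=W$ to get $W(\M(D))=W(D)$ and idempotence, observe that transitive arrows do not change reachability, and reuse the topographic part of Proposition~\ref{prop:subdivision} for subdivision. Your identification $P(\M(D))=P_W$, which gives $m_r^{\M}(D)=m_r(\M(D))$ and $b_r^{\M}(D)=\beta_r(\M(D))$, simply makes explicit a step the paper leaves implicit.
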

\begin{proof}
Apply Theorem~\ref{thm:main} to $\M(D)$.
Lemma~\ref{lem:hierarchy}, with $U=W$, gives the same levels and hence
$W(\M(D))=W(D)$. Thus $\M(\M(D))=\M(D)$.
Changing transitive arrows preserves $P_W$.
The topographic argument in Proposition~\ref{prop:subdivision} shows
that subdivision introduces no new essential vertex and leaves the
restricted order on the old essential vertices unchanged.
\end{proof}

For the nine-vertex tree $T$ of Section~\ref{sec:tree-example}, the Morse
and Poincar\'e polynomials are $3+3t$ and $1+t$; their difference is
$2(1+t)$, but $b_1^{\M}(T)=1\ne0=\beta_1(T)$.
Functoriality holds for order-preserving maps carrying essential
vertices to essential vertices.

The original counts suffice if each $w\in B_j$ has lower link in
$P_W$ with the reduced homology of $S^{j-1}$, and all vertices of
$W\setminus\bigcup_j B_j$ have acyclic lower links. Then
$m_j^{\M}=|B_j|$. This holds in the six-vertex example, but not
in the three-leaf star.

\begin{proposition}\label{prop:maxwell-lowdepth}
If $d(D)\le2$, then $S_\F(D)=W(D)$.
Consequently the original Maxwell graph already preserves all
lower and upper local groups and the homology of every order
ideal and filter.
\end{proposition}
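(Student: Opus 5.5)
The approach is to prove the inclusion $C^-(D)\cup C^+(D)\subseteq W(D)$. Then $S_\F(D)=W(D)$ by Definition~\ref{def:homological}, and so $\Mh(D)=\M(D)$. The ``consequently'' part then follows at once: apply Proposition~\ref{prop:preservation} and Proposition~\ref{prop:filtration} with $S=S_\F(D)=W(D)$. Reversing all arrows preserves $d(D)$, interchanges $C^-$ with $C^+$, and interchanges the two hierarchies (hence preserves $W$). It therefore suffices to prove $C^-(D)\subseteq W(D)$.

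The key observation is a depth count. Suppose $s(v)=2$ and $v$ had an incoming arrow $t\to v$. Then $t$ would start a directed path of length three, contradicting $d(D)\le2$. So every vertex of depth two is a source of $D$, hence lies in $H_0\subseteq W$. Every vertex of depth zero is a sink, hence lies in $B_0\subseteq W$. It remains to handle a descending critical vertex $v$ with $s(v)=1$.

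For such a $v$, every strict descendant is a sink of $D$. Thus $L^-(v)$ is a discrete set: the sinks reachable from $v$, of which there are $k=|Z_D(v)|$. Its reduced homology is nonzero exactly when $k\ge2$, in which case $\mu_1^-(v)=k-1$. Criticality therefore forces $k\ge2$, so $v\in V(\Gamma_B(D))=V(D_B^1)$. None of the strict descendants of $v$ lies in $D_B^1$, since a sink reaches only itself. Hence $v$ has no outgoing arrow in the induced subgraph $D_B^1$, and so $v\in\Sink(D_B^1)=B_1\subseteq W$. This is the same reasoning as in Proposition~\ref{prop:firstlevel}, read in reverse.

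No step looks like a genuine obstacle; the only point needing care is the depth-two case. A direct analysis of that case is messy. For example, with arrows $v\to w\to s_1,s_2$ and $v\to s_3$, the vertex $v$ is descending critical but belongs to no $B_j$. The plan avoids this by using the bound $d(D)\le2$ to force such a vertex to be a source, so that it enters $W$ through the ascending hierarchy rather than the descending one.
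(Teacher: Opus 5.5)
Your proof is correct and is essentially the paper's argument. The depth bound forces a relevant vertex to have only sinks below it (and dually only sources above), and a vertex reaching two sinks then lies in $B_1$. The paper argues contrapositively: a vertex outside $W(D)$ is neither a source nor a sink, so it has exactly one strict descendant and one strict ancestor, and both links are points. You instead argue directly by depth and pass to $D^{\mathrm{op}}$ for $C^+$, which amounts to the same thing.
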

\begin{proof}
Let $v\notin W(D)$. It is neither a source nor a sink.
Every strict descendant is a sink and every strict ancestor is
a source: otherwise a path of length three would pass through $v$.
If $v$ reached two sinks, it would be a sink of the first basin
boundary and belong to $B_1$. Thus it has exactly one strict
descendant. Dually it has exactly one strict ancestor.
Both links are points, so $v$ is regular in both directions.
The conclusion follows from Propositions~\ref{prop:preservation}
and~\ref{prop:filtration}.
\end{proof}

For $d(D)\le1$, every vertex is already a source or a sink, so
$W(D)=V(D)$ without any condition on the number of sinks.
In higher dimensions, the tree in Figure~\ref{fig:tree} shows
why the additional homologically critical vertices can be needed.

\subsection{Local Morse numbers for path homology}
Let $A_r(D)$
be the vector space spanned by allowed elementary paths
$[v_0,\ldots,v_r]$, with $v_i\to v_{i+1}$ an arrow for every $i$.
Following \cite{GLMY}, set
\[
 \Omega_r(D)=\{c\in A_r(D):\partial c\in A_{r-1}(D)\},\qquad
 H_r^{\mathrm{path}}(D)=H_r(\Omega_*(D),\partial).
\]
Here $\partial$ is the alternating vertex-deletion boundary and
$\partial_0=0$. In general $\Omega_r(D)$ is a space of linear
combinations; its basis need not consist of elementary paths.

For a vertex $v$, use the \emph{induced} digraphs
\[
 D_{\le v}=D[\{u:u\le v\}],\qquad
 D_{<v}=D[\{u:u<v\}],
\]
with their original arrows, without adding transitive arrows.
The inclusion $D_{<v}\subseteq D_{\le v}$ embeds their path chain
complexes: an allowed chain with allowed boundary in the smaller
graph has the same property in the larger graph.
Define relative local groups and their dimensions by
\begin{equation}\label{eq:pathlocal}
 F_r^-(v)=H_r\!\left(
   \Omega_*(D_{\le v})/\Omega_*(D_{<v})\right),
 \qquad \nu_r^-(v)=\dim F_r^-(v).
\end{equation}
Write $p_r(D)=\sum_v\nu_r^-(v)$ and
$b_r^{\mathrm{path}}(D)=\dim H_r^{\mathrm{path}}(D)$.
The corresponding polynomials are $p_D(t)=\sum_r p_r(D)t^r$ and
$P_D^{\mathrm{path}}(t)=\sum_r b_r^{\mathrm{path}}(D)t^r$.

\begin{proposition}[Path-homological Morse inequalities]\label{prop:pathMorse}
For every finite DAG and every $q\ge0$,
\begin{equation}\label{eq:pathMorse}
 \sum_{r=0}^q(-1)^{q-r}p_r(D)
 \ge\sum_{r=0}^q(-1)^{q-r}b_r^{\mathrm{path}}(D),
\end{equation}
with equality for all sufficiently large $q$. Also
$p_r(D)\ge b_r^{\mathrm{path}}(D)$.
If $D$ is transitive, then $F_r^-(v)\cong E_r^-(v)$.
\end{proposition}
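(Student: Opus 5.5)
Order the vertices by a linear extension $v_1,\ldots,v_n$ of $P(D)$ with lower elements first, as in the proof of Theorem~\ref{thm:main}. Put $U_i=\{v_1,\ldots,v_i\}$ and $G_i=D[U_i]$, the induced subgraph with its original arrows. Since $U_i$ is an order ideal, every allowed path starting in $U_i$ stays in $U_i$. Hence $\Omega_*(G_{i-1})\subseteq\Omega_*(G_i)$, giving a filtration of $\Omega_*(D)$ by subcomplexes ending at $\Omega_*(G_n)=\Omega_*(D)$. The argument then mirrors the proof of Theorem~\ref{thm:main}. I will identify the successive quotients with the local groups $F_*^-(v_i)$, apply the long exact sequences, and telescope.

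\textbf{Excision.} The key step is to show that
\[
 H_r\bigl(\Omega_*(G_i)/\Omega_*(G_{i-1})\bigr)\cong F_r^-(v_i).
\]
Every allowed path in $G_i$ not contained in $U_{i-1}$ must start at $v_i$, because $v_i$ is maximal in $U_i$ and paths descend. Such a path then lies in $D_{\le v_i}$. So at the level of $A_*$ the quotients agree:
\[
 A_*(G_i)/A_*(G_{i-1})\cong A_*(D_{\le v_i})/A_*(D_{<v_i}).
\]
For $\Omega$ I would argue as follows. Write $c\in\Omega_r(G_i)$ as $c=c'+c''$, where $c'$ collects the paths through $v_i$ and $c''$ lies in $A_r(G_{i-1})$. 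The $v_i$-part of $\partial c$ is the $v_i$-part of $\partial c'$, because $\partial c''$ avoids $v_i$. Non-allowed faces of $\partial c'$ either start at $v_i$ or come from deleting $v_i$; the latter are allowed only when the deleted path is $[v_i,w]$. This does not directly give $c'\in\Omega$, since non-allowed faces of $c'$ not through $v_i$ may cancel against those of $c''$.

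This is the main obstacle. I would first try to construct the map
\[
 \Omega_*(G_i)/\Omega_*(G_{i-1})\longrightarrow\Omega_*(D_{\le v_i})/\Omega_*(D_{<v_i}),
\]
by reducing the paths in $c''$ that lie outside $D_{\le v_i}$. If that fails, the fallback is to avoid excision entirely: define the filtration of $\Omega_*(D)$ directly, take successive quotients $Q^i$, and set $\nu'_r=\dim H_r(Q^i)$. The inequalities then hold verbatim for these quotients. One still needs $\nu'=\nu$ to match the statement as written, so I would prove excision via the pair $(D_{\le v_i}\cup G_{i-1},\,G_{i-1})$ and a Mayer--Vietoris-type argument for induced subgraphs whose union has no arrows between the two differences.

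\textbf{Telescoping.} Once excision is established, the long exact sequence of $\Omega_*(G_{i-1})\subseteq\Omega_*(G_i)$ gives
\[
 b_r(G_i)=b_r(G_{i-1})+\nu_r^-(v_i)-a_{i,r}-a_{i,r+1},
\]
where $a_{i,r}$ is the rank of the connecting map and $a_{i,0}=0$. Summing over $i$ gives $p_r-b_r^{\mathrm{path}}=A_r+A_{r+1}$ with $A_0=0$. This yields the weak inequalities, the strong inequalities with defect $A_{q+1}\ge0$, and equality for large $q$ because all complexes are finite-dimensional.

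\textbf{The transitive case.} When $D$ is transitive, the allowed paths $[v_0,\ldots,v_r]$ are exactly the strictly decreasing chains. Every face of such a chain is again a chain, hence allowed, so $\Omega_*=A_*$ is the ordered chain complex of $\Delta P$. The quotient complex $\Omega_*(D_{\le v})/\Omega_*(D_{<v})$ is then spanned by the chains with top element $v$. Its differential is that of the cone modulo its base, which is the shifted augmented complex of $L^-(v)$. By the argument of Lemma~\ref{lem:attachment}, this gives $F_r^-(v)\cong E_r^-(v)$.
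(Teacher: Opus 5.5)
Your filtration, the telescoping of the long exact sequences, and the transitive case all match the paper's proof. The step you flag as the main obstacle is left open, however, and it is the heart of the proposition. Without the chain isomorphism $\Omega_*(G_i)/\Omega_*(G_{i-1})\cong\Omega_*(D_{\le v_i})/\Omega_*(D_{<v_i})$, the telescoping only proves inequalities for the filtration quotients. Their dimensions are not yet known to equal $\nu_r^-(v_i)$, or even to be independent of the linear extension, so your fallback with $\nu'$ proves a different statement.

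The Mayer--Vietoris route does not avoid the issue either. For path complexes, having no arrows between the two differences does not by itself give $\Omega_*(G)=\Omega_*(G')+\Omega_*(G'')$. For example, split the square $x\to m,\ x\to m',\ m\to y,\ m'\to y$ as $\{x,m,m'\}\cup\{m,m',y\}$: the chain $[x,m,y]-[x,m',y]$ lies in $\Omega_2$ of the whole graph but not in $\Omega_2$ of either piece. That splitting is exactly what must be proved here.

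The obstacle comes from a false claim in your analysis. Deleting $v_i$ from $[v_i,w_1,\ldots,w_r]$ gives the tail $[w_1,\ldots,w_r]$, which is always allowed, and deleting the last vertex also always gives an allowed path. Forbidden faces arise only from interior deletions, and these keep the initial vertex $v_i$. So every forbidden face of $\partial c'$ starts at $v_i$, while every face of $\partial c''$ avoids $v_i$. These are distinct elementary paths and cannot cancel.

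Since $\partial c$ has no forbidden component, neither $\partial c'$ nor $\partial c''$ has one. Hence $c'\in\Omega_r(D_{\le v_i})$: its faces lie in the induced subgraph $D_{\le v_i}$, so being allowed in $D$ means being allowed there. Likewise $c''\in\Omega_r(G_{i-1})$. This gives $\Omega_*(G_i)=\Omega_*(G_{i-1})+\Omega_*(D_{\le v_i})$.

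The intersection of these two subcomplexes is $\Omega_*(D_{<v_i})$. A chain in both is supported on paths in $U_{i-1}\cap\{u\le v_i\}=\{u<v_i\}$, because lower elements come first in the linear extension. The second isomorphism theorem then yields the required chain isomorphism. This is precisely the paper's argument, and with it the rest of your proposal goes through unchanged.
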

\begin{proof}
Choose a linear extension with descendants first. For its first $i$
vertices put $D_i=D[\{v_1,\ldots,v_i\}]$.
The inclusions identify the quotient chain complexes:
\begin{equation}\label{eq:pathquotient}
 \Omega_*(D_{\le v_i})/\Omega_*(D_{<v_i})
 \ \cong\ \Omega_*(D_i)/\Omega_*(D_{i-1}).
\end{equation}
Indeed, every new allowed path starts at $v_i$, and all its remaining
vertices are strict descendants. The condition that a boundary be
allowed also separates according to the initial vertex: deleting an
endpoint always leaves an allowed path, whereas every forbidden face
comes from an interior deletion and retains the initial vertex.
Consequently forbidden faces cannot cancel between chains with
different initial vertices, so each initial-vertex component of an
$\Omega$-chain is itself an $\Omega$-chain. It follows that
\[
 \Omega_*(D_i)=\Omega_*(D_{i-1})+\Omega_*(D_{\le v_i}),\qquad
 \Omega_*(D_{i-1})\cap\Omega_*(D_{\le v_i})=\Omega_*(D_{<v_i}).
\]
The subcomplex identities give the chain isomorphism \eqref{eq:pathquotient}.

The exact sequences of this filtration give
$p_r-b_r^{\mathrm{path}}=A_r+A_{r+1}$, where $A_r\ge0$ and $A_0=0$,
as in Theorem~\ref{thm:main}. Bounded path lengths give eventual
equality. For a transitive DAG, $\Omega_*(D)$ is the simplicial chain
complex of $K(D)$, and the local quotient is the cone--base complex
of Lemma~\ref{lem:attachment}.
\end{proof}

The local groups are independent of the linear extension.
Reversing arrows gives $F_r^+(v)$ and the ascending inequalities.

Grouping paths by their initial vertex makes the differential
triangular, with diagonal homology $F_*^-(v)$.
The cancellation proof of Proposition~\ref{prop:chain}, or algebraic
Morse reduction \cite{Skoldberg}, therefore produces
\[
 \mathcal P_r(D)=\bigoplus_{v\in V(D)}F_r^-(v),\qquad
 H_r(\mathcal P_*(D))\cong H_r^{\mathrm{path}}(D).
\]
Its differential lowers the reachability label. Set
\[
 S_{\mathrm{path}}=W(D)\cup
 \{v:F_*^-(v)\ne0\}\cup\{v:F_*^+(v)\ne0\}.
\]
By Lemma~\ref{lem:hierarchy}, the Hasse diagram on $S_{\mathrm{path}}$
retains both hierarchies. The chain model carries extra data:
its differential depends on choices and may connect nonadjacent
comparable vertices. It need not be the ordinary path complex
of this Hasse diagram.

\subsection{Agreement on complete layered graphs}\label{sec:agreement}
The inclusion of the original arrows into the transitive closure
gives a chain map
\[
 \Omega_*(D)\longrightarrow C_*(K(D);\F).
\]
Its relative versions at a vertex induce maps
$\gamma_{v,r}:F_r^-(v)\to E_r^-(v)$.

\begin{lemma}\label{lem:comparison}
If every $\gamma_{v,r}$ is an isomorphism, then the inclusion above
induces an isomorphism on homology and $p_r(D)=m_r(D)$ for every $r$.
\end{lemma}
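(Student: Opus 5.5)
We compare the two vertex filtrations already used in the proofs of Theorem~\ref{thm:main} and Proposition~\ref{prop:pathMorse}. Choose one linear extension $v_1,\ldots,v_n$ of $P(D)$ with descendants first. On the path side, set $\Omega_*(D_i)$ with $D_i=D[\{v_1,\ldots,v_i\}]$. On the reachability side, set $C_*(K_i;\F)$ with $K_i=K(\{v_1,\ldots,v_i\})$.

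First we check that the chain map $\iota:\Omega_*(D)\to C_*(K(D);\F)$ respects these filtrations, i.e. $\iota(\Omega_*(D_i))\subseteq C_*(K_i)$. An allowed path $[u_0,\ldots,u_r]$ in $D_i$ is a strictly descending sequence of vertices among $v_1,\ldots,v_i$, hence a chain of $P(D)|_{\{v_1,\ldots,v_i\}}$. Thus $\iota$ induces chain maps on the successive quotients. By \eqref{eq:pathquotient} and the proof of Lemma~\ref{lem:attachment}, these quotients are $\Omega_*(D_{\le v_i})/\Omega_*(D_{<v_i})$ and the cone--base complex of $(v_i*L^-(v_i),L^-(v_i))$. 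Under these identifications the induced map on quotient homology is the relative map defining $\gamma_{v_i,*}$. This compatibility is routine, since both identifications are inclusions of subcomplexes.

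Next, induct on $i$ to show that $\iota_i:\Omega_*(D_i)\to C_*(K_i)$ is a quasi-isomorphism. The case $i=0$ is trivial, both sides being zero. For the inductive step, we have a map of short exact sequences of chain complexes
\[
0\to\Omega_*(D_{i-1})\to\Omega_*(D_i)\to\Omega_*(D_i)/\Omega_*(D_{i-1})\to0
\]
into the corresponding sequence for $K_{i-1}\subseteq K_i$. Naturality of the connecting homomorphism gives a commutative ladder of long exact sequences. The outer maps are isomorphisms: on the left by induction, and on the quotients by the hypothesis that each $\gamma_{v_i,r}$ is an isomorphism. The five lemma then makes $\iota_i$ a homology isomorphism. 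At $i=n$ we obtain $H_r^{\mathrm{path}}(D)\cong H_r(K(D);\F)$. Alternatively, the cone of $\iota$ has a finite filtration with acyclic graded pieces, so it is acyclic.

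Finally, the numerical statement needs no filtration argument. Since $\gamma_{v,r}$ is an isomorphism of finite-dimensional spaces, $\nu_r^-(v)=\mu_r^-(v)$ for every $v$ and $r$. Summing over $V(D)$ gives $p_r(D)=m_r(D)$. As a byproduct, $b_r^{\mathrm{path}}(D)=\beta_r(D)$.

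The main obstacle is making sure that $\gamma_{v,r}$, which is defined from the relative versions ``at a vertex'', really coincides with the map induced by $\iota$ on the filtration quotients $\Omega_*(D_i)/\Omega_*(D_{i-1})\to C_*(K_i,K_{i-1})$. We would verify this by writing down the commutative square of inclusions
\[
\Omega_*(D_{\le v_i})/\Omega_*(D_{<v_i})\ \to\ \Omega_*(D_i)/\Omega_*(D_{i-1}),
\]
and likewise on the simplicial side, where $C_*(K(P_{\le v_i}),K(P_{<v_i}))\to C_*(K_i,K_{i-1})$. Both horizontal maps are isomorphisms: the first by \eqref{eq:pathquotient}, the second because both sides are spanned by chains with greatest vertex $v_i$. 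The vertical maps are restrictions of $\iota$.
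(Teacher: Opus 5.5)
Your proposal is correct and follows the paper's proof. You filter $\Omega_*(D)$ and $C_*(K(D);\F)$ by the same linear extension with descendants first, identify the successive quotients with the complexes computing $F_*^-(v)$ and $E_*^-(v)$, and induct with the five lemma on the ladder of long exact sequences; your commuting square of inclusions, using \eqref{eq:pathquotient}, checks that the map on filtration quotients is $\gamma_{v,r}$, a detail the paper leaves implicit.
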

\begin{proof}
Filter both complexes by a linear extension with descendants first.
Their successive relative homology groups are $F_r^-(v)$ and
$E_r^-(v)$. Induction using the long exact sequences of the pairs
proves the assertion for each filtration step.
\end{proof}

Transitivity is a sufficient condition, but is not necessary.

\begin{proposition}\label{prop:layer-agreement}
For $D=D(a_1,\ldots,a_k)$ with only consecutive-layer arrows,
and also for its transitive closure,
\[
 H_r^{\mathrm{path}}(D)\cong H_r(K(D);\F)\cong H_r^\M(D;\F),
 \qquad p_r(D)=m_r(D)=m_r^\M(D).
\]
The common Morse numbers are those of Proposition~\ref{prop:layers}.
\end{proposition}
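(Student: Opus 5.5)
We need three equalities: path homology equals reachability homology, reachability equals Maxwell homology, and the Morse numbers agree. For the transitive closure the first is the last clause of Proposition~\ref{prop:pathMorse}, so the substantive case is the graph $D$ with only consecutive-layer arrows. Our tool there is Lemma~\ref{lem:comparison}: show that every comparison map $\gamma_{v,r}:F_r^-(v)\to E_r^-(v)$ is an isomorphism. That gives $H_*^{\mathrm{path}}(D)\cong H_*(K(D))$ and $p_r=m_r$, and Proposition~\ref{prop:layers} then supplies the values.

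Step one is to compute $\Omega_*(D_{\le v})$ for $v\in L_i$. The induced digraph $D_{\le v}$ is $v$ together with the complete layered graph on $L_{i+1},\ldots,L_k$.

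Since arrows only join consecutive layers, every allowed path descends one layer per step. So allowed $r$-paths are exactly sequences $[u_j,u_{j+1},\ldots,u_{j+r}]$ with $u_l\in L_l$.

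Deleting an interior vertex produces a jump of two layers, which is forbidden. Deleting an endpoint gives an allowed path. Hence a chain lies in $\Omega$ precisely when all interior-deletion faces cancel.

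I would verify that $\Omega_*$ is spanned by iterated ``joins of differences''. These are products $(x_j-x_j')(x_{j+1}-x_{j+1}')\cdots$, with one chosen vertex $x$ at the start and differences elsewhere. Concretely, I would identify $\Omega_*$ of a complete layered graph with the subcomplex of the join $X_{a_j}*\cdots$ built layerwise.

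A cleaner route is to compute the relative complex at $v$ directly. Its chains start at $v$ and continue through the consecutive layers below. Such a chain $v\cdot c$ has allowed boundary iff $c$ lies in the kernel of the interior-deletion map. The face $v\cdot\partial' c$ arising from deleting $c$'s first vertex must cancel, which forces $c$ to have augmentation-type constraints.

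By induction on $k-i$ I would show that
\[
F_*^-(v)\cong \widetilde H_{*-1}\bigl(X_{a_{i+1}}*\cdots*X_{a_k}\bigr).
\]
This is concentrated in degree $k-i$ with dimension $\prod_{j>i}(a_j-1)$, and it matches $E^-(v)$ from the proof of Proposition~\ref{prop:layers}.

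Next I would check that $\gamma_{v,k-i}$ is nonzero on generators, hence an isomorphism since the dimensions agree. The explicit generator is the full-length cycle $v\cdot\prod_{j>i}(x_j-x_j')$, whose image is the top cycle of the join. This gives the path equality.

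The main obstacle is the description of $\Omega$, in particular confirming that no relative classes appear in degrees below $k-i$. For this I plan to use a cone-type contraction. Fix $w\in L_{i+1}$ and define a homotopy $h(v\cdot c)=v\cdot w\cdot c'$ on chains whose second vertex differs from $w$. If that fails, I would fall back on computing Euler characteristics, which agree with $E$, and on injectivity in low degrees.

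Finally, for the Maxwell equality I would check that $W(D)=V(D)$. The descending stages are the layers $L_1\cup\cdots\cup L_{k-j}$ when $a_{k-j+1},\ldots\ge2$, and stage sinks are whole layers. Singleton layers need care, so alternatively I would show $C^\pm\subseteq W$ directly.

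If $W\ne V$, I would instead invoke Proposition~\ref{prop:preservation} together with the fact that the omitted vertices are regular. A singleton layer makes everything above it have conical links, so the restricted order is again a complete layered order and its local data coincide. This yields $H^{\M}\cong H(K(D))$ and $m^{\M}_r=m_r$.
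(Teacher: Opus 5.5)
Your overall strategy is the paper's: Proposition~\ref{prop:pathMorse} for the transitive closure, Lemma~\ref{lem:comparison} for the consecutive-layer graph, and Proposition~\ref{prop:preservation} for the Maxwell part. The gap is the step you yourself call the main obstacle, which is never carried out. Nothing in the proposal shows that $Q_*(v)=\Omega_*(D_{\le v})/\Omega_*(D_{<v})$, for $v\in L_i$, has no homology below degree $s=k-i$.

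\begin{itemize}
\item The homotopy $h(v\cdot c)=v\cdot w\cdot c'$ is not well defined. Since $c$ already begins in $L_{i+1}$, either $v\cdot w\cdot c$ is not an allowed path, or $c'$ drops a vertex and $h$ does not raise degree.
\item The fallbacks do not close the gap. Equal Euler characteristics cannot force $F_*^-(v)$ into a single degree. Neither the induction on $k-i$ nor ``injectivity in low degrees'' comes with a mechanism.
\item Your description of $\Omega_*$ is wrong. The last factor is unrestricted rather than a difference: every arrow lies in $\Omega_1$, and $[v,b_1,c]-[v,b_2,c]\in\Omega_2$.
\end{itemize}

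The missing idea is the explicit form of the quotient. Put $U_j=\F[L_j]$ and $Z_j=\ker(\epsilon_j:U_j\to\F)$. Interior deletions skipping different layers give disjoint types of forbidden faces, so
\[
Q_r(v)\cong Z_{i+1}\otimes\cdots\otimes Z_{i+r-1}\otimes U_{i+r}.
\]
In the quotient the first face vanishes and the interior faces cancel. The differential is therefore $\pm\epsilon_{i+r}$ on the last factor, followed by the inclusion $Z_{i+r-1}\subset U_{i+r-1}$. Surjectivity of $\epsilon$ gives $\ker d_r=Z_{i+1}\otimes\cdots\otimes Z_{i+r}=\operatorname{im}d_{r+1}$ for $1\le r<s$, and $d_1$ maps onto $Q_0$. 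What remains is $Z_{i+1}\otimes\cdots\otimes Z_k$ in degree $s$. If you want a contraction, it must append a vertex at the bottom end, $x\otimes u\mapsto\pm x\otimes(u-\epsilon(u)w)\otimes w'$, rather than insert one after $v$.

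There are two smaller problems.

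First, ``nonzero on generators, hence an isomorphism since the dimensions agree'' is not a valid inference once $\prod_{j>i}(a_j-1)>1$. Instead, note that neither relative complex has boundaries in degree $s$ and the chain map is injective on chains, so $\gamma_{v,s}$ is injective. Alternatively, check as the paper does that the tensor basis of $Z_{i+1}\otimes\cdots\otimes Z_k$ maps to the join basis of $E_s^-(v)$.

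Second, in the Maxwell part $W(D)=V(D)$ fails when singleton layers occur; in $D(2,1,2,1,2)$ the middle layer is omitted. Your remark that ``a singleton layer makes everything above it have conical links'' controls only lower links, but an omitted vertex must be regular in both directions. You need $W$ explicitly. Let $\ell$ and $u$ be the first and last singleton layers. The ascending hierarchy visits exactly $L_1,\ldots,L_\ell$, and the descending one exactly $L_k,\ldots,L_u$. So an omitted vertex lies strictly between two singleton layers, both of its links have a one-point join factor, and $S_\F(D)=W(D)$. Proposition~\ref{prop:preservation} then finishes as you intend.
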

\begin{proof}
First consider the Maxwell graph. If no layer is a singleton,
the descending hierarchy visits every layer, so $W=V$.
Otherwise let $\ell$ and $u$ be the first and last singleton layers,
numbered from top to bottom. The ascending hierarchy visits layers
$1,\ldots,\ell$ and then stops; the descending hierarchy visits
layers $k,\ldots,u$ and then stops. Therefore
\[
 W=\bigcup_{i\le\ell}L_i\ \cup\ \bigcup_{i\ge u}L_i.
\]
An omitted vertex lies strictly between these two singleton layers.
Each of its order links has a singleton join factor and is a cone.
Thus $S_\F=W$, and Proposition~\ref{prop:preservation} proves the
Maxwell assertions. In particular, equality of the invariants does
not require $W=V$.

For the path assertion, put $U_j=\F[L_j]$ and
$Z_j=\ker(\epsilon_j:U_j\to\F)$, where $\epsilon_j$ sums coefficients.
Fix $v\in L_i$, and let $s=k-i$.
The quotient
$Q_*(v)=\Omega_*(D_{\le v})/\Omega_*(D_{<v})$ consists of
chains starting at $v$. For $1\le r\le s$,
\[
 Q_0(v)=\F[v],\qquad
 Q_r(v)\cong
 Z_{i+1}\otimes\cdots\otimes Z_{i+r-1}\otimes U_{i+r}.
\]
Indeed, an interior deletion skips a layer and gives a forbidden
face. Its coefficients must sum to zero in that layer.
Different skipped layers give disjoint types of forbidden faces.
These conditions put every interior tensor factor in $Z_j$;
the last factor remains unrestricted.

In the quotient, the first face vanishes, the interior faces
cancel, and the last face gives the differential. Up to sign,
this is $\epsilon_{i+r}$ on the last factor, followed by the
inclusion $Z_{i+r-1}\subset U_{i+r-1}$ when $r>1$.
Since each $\epsilon_j$ is surjective,
\[
 H_r(Q_*(v))=
 \begin{cases}
 Z_{i+1}\otimes\cdots\otimes Z_k,&r=s,\\
 0,&r\ne s.
 \end{cases}
\]
For $s=0$ the tensor product is $\F$ in degree zero.
Under inclusion into the order complex, these top cycles are the
join cycles of the discrete lower layers. They form the usual
tensor-product basis of $E_s^-(v)$ from the join formula.
Hence every $\gamma_{v,r}$ is an isomorphism.
Lemma~\ref{lem:comparison} proves the path assertions.
The transitive closure follows directly from
Proposition~\ref{prop:pathMorse}.
\end{proof}

For three layers the path Morse numbers are therefore
\[
 (p_0,p_1,p_2)=\bigl(c,\ b(c-1),\ a(b-1)(c-1)\bigr).
\]
The linear combinations in $\Omega_2$ are essential here.
Although $[a_i,b_j,c_l]$ is not an individual allowed-boundary
chain without the shortcut $a_i\to c_l$, the difference
$[a_i,b_j,c_l]-[a_i,b_{j'},c_l]$ is.
Ignoring such cancellations would incorrectly remove all
degree-two local groups.

\subsection{Distinguishing examples and admissible deletions}\label{sec:differences}
The diamond provides a simple obstruction to preserving path homology
by the original Maxwell reduction. Subdivide its arrow $a\to b$
to obtain the graph $J$:
\[
 a\to x\to b\to d,\qquad a\to c\to d.
\]
Both original Maxwell graphs are the single arrow $a\to d$.
For the diamond, $\Omega_2$ is spanned by
$[a,b,d]-[a,c,d]$, whose boundary fills its only undirected cycle.
Thus its first path Betti number is zero.
In $J$, the three allowed two-paths have distinct forbidden shortcut
faces $[a,b]$, $[x,d]$, and $[a,d]$. Hence $\Omega_2(J)=0$.
Its five arrows and five vertices form a connected cycle, so
\[
 H_1^{\mathrm{path}}(J;\F)\cong\F.
\]
Thus the original Maxwell graph does not determine path homology,
which is also not subdivision-invariant. Since $J$ has a unique sink,
all nonsink lower order links are cones, so $m_1(J)=0$.
This also proves the failure of substituting path Betti numbers
in \eqref{eq:strong} noted in the introduction.

In $J$, only $d$ contributes
in degree zero and only $a$ contributes in degree one:
\[
 p_J(t)=1+t=P_J^{\mathrm{path}}(t).
\]
Removing $a$ leaves a directed tree, so its relative group records
the cycle; all other nonsink local groups vanish. Reversing arrows
gives $S_{\mathrm{path}}=\{a,d\}$. The chain model has one generator
in each of degrees zero and one and zero differential. The arrow
$a\to d$ itself has no degree-one path homology.

\paragraph{A two-dimensional class lost by the original reduction.}
Let $\Sigma$ be obtained from the tree $T$ in
Section~\ref{sec:tree-example} by replacing $x$ with two incomparable
vertices $x,y$, each having the former predecessors and successors
of $x$. Explicitly, its ten vertices have arrows
\[
 a\to p,\ b\to p,\qquad
 p,c\to x,y,\qquad x,y\to q,d,\qquad q\to e,f,
\]
where an arrow between two lists denotes all the indicated arrows.
Its order complex splits as a join
\[
 K(\Sigma)=K(\{a,b,p,c\})*\{x,y\}*K(\{q,d,e,f\}).
\]
The first and last factors each have two contractible components,
so $K(\Sigma)\simeq S^0*S^0*S^0\cong S^2$.
The only nonempty boundary levels are
\[
 B_0=\{d,e,f\},\quad B_1=\{q\},\qquad
 H_0=\{a,b,c\},\quad H_1=\{p\}.
\]
The original Maxwell graph deletes both $x$ and $y$. Its order
complex is the join of the first and last factors above and is
homotopy equivalent to $S^1$. Thus this reduction loses a genuine
two-dimensional class and introduces a one-dimensional class.
Both $x$ and $y$ have nonzero lower and upper local homology, so
our homological Maxwell graph retains them.

The descending local contributions for $\Sigma$ are one in degree
zero at each of $d,e,f$, one in degree one at each of $q,x,y$,
and one in degree two at each of $p,c$. Therefore
\[
 m_\Sigma(t)=3+3t+2t^2,\qquad
 P_\Sigma(t)=1+t^2,\qquad Q_\Sigma(t)=2+t.
\]
On the original Maxwell graph the corresponding polynomials are
$3+3t$, $1+t$, and $2$.

\paragraph{The same two-dimensional class is invisible to path homology.}
Subdivide the arrow $x\to d$ in $\Sigma$ by a new vertex $z$;
call the resulting graph $\Sigma'$ (Figure~\ref{fig:nontrivial}).
Proposition~\ref{prop:subdivision} shows that its reachability
homology, local Morse numbers, and Maxwell graph are unchanged.
In particular, $K(\Sigma')\simeq S^2$. Here the homotopy statement
also follows directly: the new vertex has the single strict
descendant $d$, so deleting it collapses its conical star onto
its link and recovers $K(\Sigma)$.

The path computation is quite different. For $\Sigma$, a basis
of $\Omega_2$ consists of the four chains
\[
 [u,x,w]-[u,y,w],\qquad u\in\{p,c\},\quad w\in\{q,d\}.
\]
Their boundaries have rank three; the alternating sum of these
four chains generates $H_2^{\mathrm{path}}(\Sigma;\F)$.
At each of $p,c$, the two relative boundaries have rank one,
giving one local degree-two generator; the other local path
groups agree with the reachability groups listed above.
For $\Sigma'$, only the two chains with $w=q$ remain in
$\Omega_2$, and their boundaries are independent. To verify that
these lists are complete, inspect the missing shortcut for each
allowed two-path. All other shortcuts have a unique allowed
filling, so their coefficients must vanish. Similarly,
$\Omega_3=0$ in both graphs: every allowed three-path has an
interior face with a unique allowed filling. The same argument
excludes higher path chains.

The graph $\Sigma'$ is connected, with eleven vertices and
thirteen arrows. Consequently
\[
 \beta_1^{\mathrm{path}}(\Sigma')=13-10-2=1,
 \qquad H_r^{\mathrm{path}}(\Sigma';\F)=0\quad(r\ge2).
\]
Its local path groups occur only at the sinks $d,e,f$ in degree
zero and at $q,x,y$ in degree one. At $p$ and $c$, the relative
boundary of $[u,x,q]-[u,y,q]$ cancels the potential degree-one
generator, leaving no local homology. Thus
\[
 p_{\Sigma'}(t)=3+3t,\qquad
 P_{\Sigma'}^{\mathrm{path}}(t)=1+t.
\]
Our theory therefore retains a nonzero $H_2$ that neither of the
other two constructions records for this graph. The path result
is a different invariant of the arrows, whereas the loss under
the original Maxwell reduction changes the reachability homology
itself.

\begin{figure}[t]
\centering
\begin{minipage}{.47\textwidth}
\centering
\begin{tikzpicture}[scale=.83,>=Stealth,every node/.style={inner sep=2pt}]
 \node (a) at (-1.8,4) {$a$}; \node (b) at (-.2,4) {$b$};
 \node (p) at (-1,3) {$p$}; \node (c) at (1.2,3) {$c$};
 \node (x) at (-1,2) {$x$}; \node (y) at (1.2,2) {$y$};
 \node (q) at (-1,1) {$q$}; \node (z) at (.5,.8) {$z$};
 \node (e) at (-1.8,0) {$e$}; \node (f) at (-.2,0) {$f$};
 \node (d) at (1.8,0) {$d$};
 \foreach \u/\v in {a/p,b/p,p/x,p/y,c/x,c/y,x/q,y/q,x/z,z/d,y/d,q/e,q/f}
  \draw[->] (\u)--(\v);
\end{tikzpicture}
\par (a) $\Sigma'$: a two-dimensional class.
\end{minipage}\hfill
\begin{minipage}{.50\textwidth}
\centering
\begin{tikzpicture}[scale=.83,>=Stealth,every node/.style={inner sep=2pt}]
 \node (o) at (0,4) {$o$}; \node (a) at (0,3) {$a$};
 \node (b) at (0,2) {$b$}; \node (c) at (0,1) {$c$};
 \node (d) at (0,0) {$d$}; \node (e) at (2,1) {$e$};
 \node (u) at (2,3) {$u$};
 \foreach \v/\w in {o/a,a/b,b/c,c/d,b/e,u/e,u/d}
  \draw[->] (\v)--(\w);
 \draw[->] (a) to[bend right=55] (d);
 \draw[->] (o) to[bend right=30] (b);
 \draw[->] (o) to[bend right=46] (c);
 \draw[->] (o) to[bend right=62] (d);
\end{tikzpicture}
\par (b) $\Lambda$: sharper inequalities.
\end{minipage}
\caption{Two connected examples with nonzero reduced reachability
homology. In (a), replacing $x\to z\to d$ by $x\to d$ gives
$\Sigma$.}
\label{fig:nontrivial}
\end{figure}

\paragraph{Sharper inequalities with nonzero first homology.}
Let $\Lambda$ have seven vertices $o,u,a,b,c,d,e$ and arrows
\[
 a\to b\to c\to d,\quad a\to d,\quad
 o\to a,b,c,d,\quad b\to e,\quad u\to d,e.
\]
The order complex on all vertices except $u$ is a cone with
greatest vertex $o$. Adding $u$ attaches a cone on the two-point
set $\{d,e\}$, so $K(\Lambda)\simeq S^1$.
Only $d,e$ contribute in degree zero and $b,u$ in degree one:
the other nonsink lower links are cones. Hence
\[
 m_\Lambda(t)=2+2t,\qquad P_\Lambda(t)=1+t,
 \qquad Q_\Lambda(t)=1.
\]
The Maxwell set is $\{o,u,b,d,e\}$. Deleting $a,c$ leaves the
same local Morse numbers and homology, so its three polynomials
are identical to these.
These reachability counts are minimal for a connected DAG with
$\beta_1=1$: a unique sink would make its order complex a cone,
so $m_0\ge2$, and the strong inequality gives $m_1\ge m_0\ge2$.

For path homology, the only allowed two-chains with allowed
boundary are spanned by
\[
 [o,a,b],\quad[o,b,c],\quad[o,c,d],\quad[o,a,d].
\]
Their boundaries are independent, and there are no nonzero path
chains of degree at least three. The latter assertion follows
because every allowed three-path has a forbidden interior face
with a unique filling; the same holds in degree four.
Since $\Lambda$ has eleven arrows, this gives
$\beta_1^{\mathrm{path}}=11-6-4=1$ and no higher path homology.
At $o$, however, the relative boundaries of these four two-chains
have rank three: they are the incidence vectors of the cycle
$a-b-c-d-a$. Thus the local path group at $o$ is one-dimensional
in degree two. In addition to $b,u$, the vertex $a$ contributes
one local path generator in degree one. It follows that
\[
 p_\Lambda(t)=2+3t+t^2,\qquad
 P_\Lambda^{\mathrm{path}}(t)=1+t,\qquad
 Q_\Lambda^{\mathrm{path}}(t)=1+t.
\]
All three homologies agree, but our degree-one strong inequality
is an equality, whereas the local path inequality has defect one:
\[
 (2-2)=(1-1)=0,\qquad (3-2)>(1-1)=0.
\]
The extra degree-two and degree-one path generators cancel in
the path Morse complex. The comparison concerns the canonical
local constructions, not an optimal choice of discrete gradient.

\begin{center}
\small
\setlength{\tabcolsep}{6pt}
\begin{tabular}{llccc}
\toprule
Graph & Construction & Poincar\'e $P(t)$ & Morse $m(t)$ & Defect $Q(t)$\\
\midrule
$\Sigma$ & Reachability & $1+t^2$ & $3+3t+2t^2$ & $2+t$\\
 & Maxwell & $1+t$ & $3+3t$ & $2$\\
 & Path & $1+t^2$ & $3+3t+2t^2$ & $2+t$\\
\midrule
$\Sigma'$ & Reachability & $1+t^2$ & $3+3t+2t^2$ & $2+t$\\
 & Maxwell & $1+t$ & $3+3t$ & $2$\\
 & Path & $1+t$ & $3+3t$ & $2$\\
\midrule
$\Lambda$ & Reachability & $1+t$ & $2+2t$ & $1$\\
 & Maxwell & $1+t$ & $2+2t$ & $1$\\
 & Path & $1+t$ & $2+3t+t^2$ & $1+t$\\
\bottomrule
\end{tabular}
\end{center}
Here $m(t)$ denotes the local Morse polynomial of the indicated
construction (written $p_D(t)$ for paths), and $Q(t)$ is computed
from that construction's own $P(t)$. Every row has nonzero reduced
homology, and all computations hold over any field. For $\Sigma$
and $\Sigma'$, preserving the two-dimensional class is the relevant
distinction; defects attached to different homologies do not rank
the theories. For $\Lambda$, the homologies coincide, so the defect
polynomials give a direct comparison of sharpness.

A useful deletion is possible when $z$ has exactly one incoming arrow
$u\to z$, one outgoing arrow $z\to w$, and $u\to w$ is already present.
The retraction $r:D\to D-z$ sending $z$ to $w$ and fixing other
vertices satisfies $\mathrm{id}_D\simeq i\circ r$ via $z\to w$,
where $i$ is inclusion. Thus deletion preserves path homology;
see \cite[Section~6]{WangRen}. It preserves reachability on the
remaining vertices, and $z$ belongs to no boundary level by the
argument of Proposition~\ref{prop:subdivision}. Both hierarchies
therefore survive. Inserting a missing shortcut first is not allowed.

\subsection{Comparison with discrete gradients}\label{sec:gradient-comparison}
The construction in \cite[Definition~2.1]{LinWangYau} starts with
$f:V(D)\to[0,\infty)$ and assigns
$f([v_0,\ldots,v_r])=\sum_i f(v_i)$.
Each path may have at most one immediate extension of the same
value and at most one immediate face of the same value.
Pairing uses the insertion or deletion of a zero-valued vertex.
The corresponding gradient $V$ defines the chain flow
$\Phi=I+\partial V+V\partial$.
When the function extends to the transitive closure and
$V\Omega_*(D)\subseteq\Omega_*(D)$, stabilized critical paths give
the reduced complex described in \cite[Theorem~2.17]{LinWangYau}.

Our reduction in Proposition~\ref{prop:chain} instead cancels
acyclic pairs inside the blocks labelled by a vertex, leaving its
local homology as the corresponding graded summand. The path
version does the same with $F_*^-(v)$.
Both methods transfer a differential to a smaller chain space,
but neither that differential nor its coefficients are generally
the arrows of a reduced graph. Our graded dimensions are fixed
before the cancellation choices are made. A gradient construction
can change the numbers of critical paths when its auxiliary
function changes.

The permitted vertex functions should not be confused with all
Forman matchings on the order complex. This distinction is already
visible on the directed $K_{2,2}$, which is transitive.

\begin{example}\label{ex:LWY}
Write its arrows as $a_i\to b_j$, $1\le i,j\le2$.
A function of the preceding kind has at most one zero vertex:
two zeros in the same part give two equal-value extensions of a
vertex in the other part, and two zeros in opposite parts give
two equal-value faces of their connecting edge.
With no zeros, all four vertices and all four edges are critical.
With one zero, exactly two vertices and two edges are critical.
For instance, if $f(a_1)=0$ and the other values are positive,
the critical vertices are $a_1,a_2$ and the critical edges are
$a_2b_1,a_2b_2$. Thus the possible critical counts are $(4,4)$
and $(2,2)$, whereas the Betti numbers are $(1,1)$.

As an undirected cell complex, the four-cycle does admit a Forman
matching with one critical vertex and one critical edge: pair the
other three vertices with the edges leading towards a root in a
spanning tree. The fourth edge is critical, and the tree gives no
closed gradient path. This matching does not come from the
vertex-additive functions above.
\end{example}

Consequently, the existence of a small Forman Morse complex does
not by itself imply that the digraph construction of
\cite{LinWangYau} attains the Betti numbers. Likewise, our canonical
multiplicities should not be interpreted as optimal critical counts
over all possible matchings.

\subsection{Further questions}\label{sec:questions}
The examples suggest three questions.

\begin{enumerate}[label=(\arabic*),leftmargin=*]
\item Find structural conditions on the arrows, weaker than
transitivity, that make all local comparison maps
$\gamma_{v,r}$ isomorphisms. Complete layered graphs satisfy this
condition, while the subdivided diamond does not.
Which graph operations preserve it?

\item Develop deletion rules that preserve path homology together
with both Maxwell hierarchies. The admissible deletion above gives
one such rule, but the vertex set $S_{\mathrm{path}}$ alone does not
determine a suitable graph: its transferred differential contains
additional information. When can that information be represented
by the ordinary path complex of a smaller DAG?

\item For fixed numbers of vertices and fixed dimension, determine
the attainable Morse and Betti polynomials and their sharp bounds.
Section~\ref{sec:dimone} settles the unrestricted dimension-one
realizability question. In higher dimensions the problem must
allow local homology in several degrees and dependence on the
coefficient field.
\end{enumerate}

\bigskip
\noindent\textbf{AI use statement.} LLMs were used for proof refinement,
literature review, and manuscript revision.

\bigskip
\noindent The University of Texas Rio Grande Valley, Brownsville, Texas, USA\\
\href{mailto:oleg.musin@utrgv.edu}{\texttt{oleg.musin@utrgv.edu}}
\end{document}